 \newcommand{\nc}{\newcommand}
 \renewcommand{\aa}{\mathfrak{a} } 
\nc{\bb}{\mathfrak{b} }
 \nc{\cc}{\mathfrak{c} }  \nc{\dd}{\mathfrak{d} } 
    \nc{\ggo}{\mathfrak{g} }
 \nc{\hh}{\mathfrak{h} }  \nc{\ii}{\mathfrak{i} }
 \nc{\jj}{\mathfrak{j} }  \nc{\kk}{\mathfrak{k} }
\nc{\mm}{\mathfrak{m} }   \nc{\nn}{\mathfrak{n} }
\nc{\pp}{\mathfrak{p} }   
\nc{\rr}{\mathfrak{r} } \nc{\sg}{\mathfrak{s} }
 \nc{\sso}{\mathfrak{so} }  \nc{\spg}{\mathfrak{sp} }
 \nc{\ssu}{\mathfrak{su} }  \nc{\ssl}{\mathfrak{sl} }
 \nc{\tog}{\mathfrak{t} }  \nc{\uu}{\mathfrak{u} }
 \nc{\vv}{\mathfrak{v} } \nc{\ww}{\mathfrak{w} }
 \nc{\zz}{\mathfrak{z} }
\nc{\CC}{{\mathbb C}}
 \nc{\DD}{{\mathbb D}}
\nc{\FF}{{\mathbb F}}
\nc{\GG}{{\mathbb G}}  
\nc{\HH}{{\mathbb H}}
\nc{\II}{{\mathbb I}}
\nc{\JJ}{{\mathbb J}}
\nc{\KK}{{\mathbb K}}
\nc{\NN}{{\mathbb N}}
\nc{\RR}{{\mathbb R}}  
 \nc{\ZZ}{{\mathbb Z}}
\nc{\ggob}{\overline{\mathfrak{g}}} 
\nc{\glg}{\mathfrak{gl} }
\nc{\pca}{\mathcal{P}} \nc{\nca}{\mathcal{N}}
 \nc{\vp}{\varphi} \nc{\ddt}{\frac{{\rm d}}{{\rm d}t}}
 \nc{\la}{\langle} \nc{\ra}{\rangle}
 \nc{\brg}{[\,,\,]_{\ggo}}
 \nc{\brv}{[\,,\,]_{\vv}}
 \nc{\SO}{{\sf SO}} \nc{\Spe}{{\sf Sp}} \nc{\Sl}{{\sf Sl}}
 \nc{\SU}{{\sf SU}} \nc{\Or}{{\sf O}} \nc{\U}{{\sf U}}
 \nc{\Gl}{{\sf Gl}} \nc{\Se}{{\sf S}} \nc{\Cl}{{\sf Cl}}
 \nc{\Spin}{{\sf Spin}} \nc{\Pin}{{\sf Pin}}
 \nc{\ad}{\operatorname{ad}} 
 \nc{\Ad}{\operatorname{Ad}}
 \nc{\coad}{\operatorname{coad}}
 \nc{\Aut}{\operatorname{Aut}}
 \nc{\Der}{\operatorname{Der}}
 \nc{\Deri}{\operatorname{Deri}}
 \nc{\Dera}{\operatorname{Dera}}
 \nc{\End}{\operatorname{End}} 
 \nc{\Iso}{\operatorname{I}}
  \nc{\Ric}{\operatorname{Ric}}
 \nc{\spa}{\operatorname{span}}
  \nc{\tr}{\operatorname{tr}}
\nc{\rank}{\operatorname{rank}} 
\nc{\ct}{\operatorname{T}}
 \theoremstyle{plain}
 \newtheorem{thm}{Theorem}[section]
 \newtheorem{prop}[thm]{Proposition}
 \newtheorem{lem}[thm]{Lemma}
 \theoremstyle{definition}
 \newtheorem{defn}[thm]{Definition}
 \theoremstyle{remark}
 \newtheorem*{rem}{Remark}
 \newtheorem{exa}[thm]{Example}
\begin{document}
\title[Naturally reductive pseudo-Riemannian spaces]
{Naturally reductive pseudo-Riemannian spaces}

\author{Gabriela P. Ovando}
\address{G. P. Ovando: CONICET and ECEN-FCEIA, Universidad Nacional de Rosario 
\\Pellegrini 250, 2000 Rosario, Santa Fe, Argentina}
\email{gabriela@fceia.unr.edu.ar}

%\date{\today}

\begin{abstract} A family of naturally reductive pseudo-Riemannian spaces  is constructed
out of the representations of  Lie algebras with   ad-invariant metrics. We 
exhibit peculiar examples, study their geometry
and  characterize the corresponding naturally reductive homogeneous
structure.
\end{abstract}

\thanks{{\it (2010) Mathematics Subject Classification}:  53C50 53C30 57S25
22F30 53C35. }

\thanks{ Current address: Abteilung f\"ur Reine Mathematik, Albert-Ludwigs
Universit\"at 
Freiburg, Eckerstr.1, 79104 Freiburg, Germany.}

\maketitle

\section{Introduction} 
Recent advances in mathematics and mathematical physics have renovated the interest in the geometry of pseudo-Riemannian metrics of general signature. For example, such metrics
constitute the basis  for certain sigma models,
 supergravities, braneworld cosmology, etc.(see \cite{Co,dW,Ff,F-M-P1,O-V,S-S,vP} and references therein).

Some of this work leads to the investigation of homogeneous geodesics (see for
instance \cite{Du3,Lt1,Lt2}) and manifolds in which every geodesic is homogeneous, called
{\em g.o. spaces} (see the survey in \cite{Du1}). For a time, it was believed that every Riemannian g.o. space is naturally
 reductive, until Kaplan \cite{Ka} found the first counterexample, extended
 to the  pseudo-Riemannian case  recently in \cite{D-K}.
There are significant differences between
 the definite situation, started by Vinberg \cite{Vi} and
followed by others (\cite{Ko2, K-V}, etc.), and the indefinite situation in
which  advances were done in \cite{T-V1, G-O2}.

Ambrose and Singer \cite{A-S} achieved  an infinitesimal characterization of connected simply connected
and complete homogeneous Riemannian manifolds in terms 
of a (1,2) tensor, called a homogeneous structure. This  was generalized to the pseudo-Riemannian
case by Gadea and Oubi\~na \cite{G-O1}. 
 For 
instance
some homogeneous structures
of type $\mathcal T_1 \oplus \mathcal
T_3$ characterize homogeneous Lorentzian spaces for which
every null-geodesic is canonically homogeneous \cite{Me1}.
Naturally reductive pseudo-Riemannian spaces are defined by the existence  of
 a homogeneous structure of type $\mathcal T_3$ (\cite{T-V2,G-O2}). Particular examples are
the pseudo-Riemannian symmetric spaces (homogeneous structures of type
$\{0\}$), which in the Lorentzian situation
emerge as supersymmetric supergravity backgrounds \cite{Ff}.
 Furthermore,  it is conjectured that a pseudo-Riemannian g.o. 
space with non-compact isotropy group should be naturally reductive
\cite{Du2}.

 \vskip 5pt
 
In \cite{Ko1} Kostant proved that if $M$ denotes a naturally reductive 
Riemannian space, such that
the action of the isometry group $G$ on $M$ is transitive and almost
effective, then  $G$
 can be provided with  a bi-invariant metric.  Our first result in this
 work (Theorem
(\ref{t1}) below) extends Kostant's Theorem to the pseudo-Riemannian case.
  As consequence, naturally reductive metrics can be produced 
 in the following way. Let $\ggo$ denote a Lie algebra  equipped with  an 
 $\ad$-invariant
 metric $Q$ and let $\hh \subset \ggo$ be a nondegenerate Lie subalgebra. Thus
  one has the following  reductive decomposition
$$\ggo=\hh \oplus \mm \qquad \quad \mbox{ with } \qquad
[\hh,\mm]\subseteq \mm \quad \mbox{ and } \quad \mm=\hh^{\perp}.$$
Let $G$ denote a Lie group with Lie algebra $\ggo$ and endowed with the
bi-invariant metric induced by $Q$ and let $H\subset G$ denote a closed Lie
subgroup with Lie algebra $\hh$. Then the coset space $G/H$  becomes a naturally reductive pseudo-Riemannian
space.

Here we focus on the construction of a family of simply connected naturally
reductive pseudo-Riemannian Lie groups $\mathcal G(D)$, which can be obtained
as follows. For a class of Lie
algebras $\ggo$ which can be provided with an $\ad$-invariant metric, one 
has   the following decomposition as a semidirect sum of vector spaces
$$\ggo=\hh
\oplus \mathcal G(\dd),$$
where $\mathcal G(\dd)$ denotes the Lie algebra of $\mathcal G(D)$ and 
$\ggo$
 is an isometry Lie algebra acting on $\mathcal G(\dd)$ with 
stability Lie algebra $\hh$.
 In general $\mm \neq
 \mathcal G(\dd)$, however as vector spaces they are isomorphic via the  map 
 $\lambda : \mathcal G(\dd) \to \mm$ which induces the metric  of $\mm$ to
 $\mathcal G(\dd)$ making both spaces linearly
 isometric. The metric $\la \,,\,\ra$  induced on 
 $\mathcal G(\dd)$ is defined on the Lie group $\mathcal G(D)$ by 
 translations on the left.

 The algebraic structure of $\ggo$ can be specified. The
 Lie algebra $\mathcal G(\dd)$ is an ideal in $\ggo$ 
defined on the underlying vector space
 $$\mathcal G(\dd)=\dd \oplus \hh^* \qquad \mbox{ (direct sum as vector
 spaces)}$$
 where $\hh^*$ denotes the dual space of $\hh$, which together with $\dd$, 
 obeys the following data:

\begin{itemize}
  \item  a Lie algebra $\hh$ with  $\ad$-invariant
  metric $\la\,,\,\ra_{\hh}$
  \item  a Lie algebra $\dd$ with  $\ad$-invariant
  metric $\la\,,\,\ra_{\dd}$,
  \item  a Lie algebra   homomorphism $\pi: \hh \to \Dera(\dd, \la\,,\,\ra_{\dd})$ from $\hh$ to the Lie
 algebra of skew-symmetric derivations of  $(\dd, \la\,,\,\ra_{\dd})$.
 \end{itemize}

Another proof for the naturally reductivity property of the Lie groups $(\mathcal
G(D), \la \,,\,\ra)$
is done in terms of
homogeneous structures. For example,
$$T_x y= \frac12 \lambda^{-1} [\lambda x, \lambda y]_{\mm} \qquad \quad \mbox{
for all } x,y \in \mathcal G(\dd)$$
 gives rise to a naturally reductive structure on $\mathcal G(D)$, with 
 $\lambda: (\mathcal G(\dd), \la\,,\,\ra) \to (\mm, Q)$ as already
 mentioned.
This formula generalizes that one of
Riemannian nilmanifolds, where such a naturally reductive homogeneous
structure is unique if the nilmanifold has no Euclidean factor \cite{La2}.

We study the geometry and in particular the group of
orthogonal automorphisms of $\mathcal G(D)$, which allows the production 
of   
 examples of naturally reductive
pseudo-Riemannian spaces with compact or noncompact isotropy group.

Finally we analyze  some special cases.  Starting
with $\dd$ which is $k$-step nilpotent  (resp. solvable) then $\mathcal G(\dd)$ turns
out to be  at most
$k+1$-step nilpotent (resp. solvable). This gives rise to the first examples (known to us) of  naturally
reductive metrics on non semisimple Lie groups, which are neither symmetric nor 2-step
nilpotent.

 \section{Isometries on naturally reductive pseudo-Riemannian 
 manifolds}\label{sec2}

 Let $M$ denote a connected manifold with homogeneous pseudo-Riemannian metric
$\la\,,\,\ra$. Let $G$ be any transitive group of isometries with Lie algebra
$\ggo$ and let $H$  be the isotropy subgroup at some point
$o\in M$. Then  $M\simeq G/H$ is called a homogeneous space which is
{\em reductive} if there exists a subspace $\mm\subseteq \ggo$ such that 
\begin{equation}\label{red}
\ggo=\hh \oplus \mm \qquad \quad \Ad(H)\mm \subseteq \mm
\end{equation}
where $\hh$ denotes the Lie algebra of $H$. If $\la\,,\,\ra$ is positive
definite then $M$ is always reductive, that is, a subspace $\mm$
satisfying (\ref{red}) always exists. 
 
 The action is said {\em (almost) effective} if the set of elements in
$G$ acting as the identity transformation is (discrete) trivial. Almost
effectiveness is equivalent to the requirement that the Lie algebra  $\hh$ contains no non
trivial ideal of $\ggo$. Thus the linear representation of 
  the isotropy group $H$ in $T_oM$, the tangent space to the manifold at the point
  $o=eH\in M$, is   faithful. One may identify $\mm$ with $T_oM$ by the map 
$x \to x^{\bullet}_o$,
with 
\begin{equation}\label{idev}
x_o^{\bullet}=\frac{d}{dt}|_{t=0} \exp tx \cdot o.
\end{equation}
 The map $x\to x^{\bullet}$ is induced by an antihomomorphism of Lie algebras, so
$[x^{\bullet},y^{\bullet}]_o=-[x,y]_o^{\bullet}$.  
One may pull back $\la\,,\,\ra_o$ on $T_oM$ to a (non necessarily definite)
metric on $\mm$. Let $x_{\hh}$ and $x_{\mm}$ denote the $\hh$,
resp. $\mm$ component of $x\in \ggo$.
  
   The canonical connection $\nabla^c$ on the reductive space $G/H$  is the unique 
   $G$-invariant affine connection on $M$ 
  such that for any vector $x\in \mm$ and any frame $u$ at the point $o$,
   the curve $(\exp tx)u$ in the   bundle of frames over $M$ 
   is horizontal. The canonical   connection is complete and the set of 
   its geodesics through $o$ coincides 
  with the set of curves of the type $\exp(tx) \cdot o$, with $x\in \mm$.
Explicitly 
\begin{equation}\label{cc}
(\nabla^c_{x^{\bullet}}y^{\bullet})_o:=[x^{\bullet},y^{\bullet}]_o=-[x,y]^{\bullet}_o \quad \longleftrightarrow \quad
-[x,y]_\mm \quad \mbox{ for } x, y\in \mm.
\end{equation}
 This is an affine connection whose torsion is 
 $$T^c(x^{\bullet},y^{\bullet})=-[x^{\bullet},y^{\bullet}],$$
thus via the identifications, $T^c(x,y)=-[x,y]_{\mm}$ for $x,y \in \mm$ and
as  in \cite{K-N} Ch. X, 
%p. 193 
the curvature is given by
$$R^c(x,y)z=-[[x,y]_{\hh},z] \qquad \quad x, y, z\in \mm.$$
The tensor fields $T$ and $R$ are parallel with respect to the canonical 
connection. In 
the reductive space $M=G/H$ with reductive  decomposition $\ggo=\hh \oplus
\mm$ as in (\ref{red}) there is a unique
$G$-invariant affine connection with zero torsion having the same 
geodesics 
 as the canonical connection. This connection is called the {\em natural 
 torsion-free connection} on $M$ (with respect  to the decomposition $\ggo=\hh
 \oplus \mm$). 
 
 \begin{defn} \label{def1} A homogeneous pseudo-Riemannian  manifold 
  \noindent $(M=G/H$, $ \la\,,\,\ra)$ is said to be {\em naturally reductive}
 if it is reductive, i.e. there is a  reductive decomposition 
 $$\ggo=\hh \oplus \mm\qquad \mbox{ with }\qquad \Ad(H)\mm \subseteq \mm$$
and 
$$\la [x,y]_{\mm}, z\ra + \la y, [x,z]_{\mm} \ra=0 \qquad \mbox{ for all } x, y,
 z\in \mm.$$
\end{defn}

Frequently we will say that a metric on a homogeneous space $M$ is naturally 
reductive even though it is not naturally reductive with respect to a particular 
transitive group of isometries (see Lemma 2.3 in \cite{Go}).

 Indeed pseudo-Riemannian  symmetric spaces are naturally reductive. Other 
 examples  arise from Lie groups equipped with a bi-invariant metric. 
  
  If $M$ is naturally reductive  the natural torsion-free 
 connection coincides  with the corresponding Levi Civita connection on $M$, which is given by
 $$(\nabla_{x^{\bullet}}y^{\bullet})_o=\frac12 [x^{\bullet}, y^{\bullet}]_o 
 \quad \longleftrightarrow \quad -\frac12 [x,y]_{\mm} \qquad \mbox{ for } x,y \in \mm.$$
  The  geodesics passing through $o\in M$ are of 
 the form  
 $$\gamma(t)=\exp(t x) \cdot o\qquad \quad \mbox{ for some }x\in \mm.$$

The canonical connection on a naturally reductive pseudo-Riemannian space $M=G/H$ is metric and the curvature tensor satisfies the
following identity
\begin{equation}\label{bianchi}
\la R(x,y)z, w\ra= \la R(z,w)x, y\ra \qquad \quad x, y, z, w\in \mm
\end{equation}
which follows after applications of the Jacobi identity and the naturally
reductive condition (see \cite{DA-Z}).

In view of the definition above if one wants to determine if a given metric is or is not naturally reductive,
 one would first have to find all Lie groups $G$ acting transitive on $M$ and in
 presence of a naturally reductive metric, study all $H$-invariant complements
 $\mm$ of $\hh$. The latter task can be simplified by the next theorem,
 proved in its Riemannian version originally by Kostant \cite{Ko1} (see
 also \cite{DA-Z}). A modification of the proof adapted to the indefinite
  case validates the result for the pseudo-Riemannian situation.
 
 \begin{thm} \label{t1} Let $(M=G/H, \la\,,\,\ra)$ denote a  pseudo-Riemannian naturally reductive  space on which
 $G$ acts almost effectively. Let  $\ggo=\hh \oplus \mm$ be a reductive
 decomposition. Then
 $\bar{\ggo}=\mm +[\mm,\mm]$ is an ideal in $\ggo$ whose corresponding analytic
 subgroup $\bar{G}$ is transitive on $M$ and there exists a unique 
 $\Ad$($\bar{G})$-invariant symmetric nondegenerate bilinear form $Q$ on $\bar{\ggo}$ such that
 $$Q(\hh \cap \bar{\ggo}, \mm)=0\qquad \qquad Q|_{\mm\times\mm}=\la\,,\,\ra$$
 where $\hh \cap \bar{\ggo}$ will be the isotropy algebra in $\bar{\ggo}$.
 
 Conversely if $G$ is connected, then for any $\Ad$($G$)-invariant symmetric 
 nondegenerate bilinear form $Q$ on $\ggo$, which is also nondegenerate on
 $\hh$ (and on $\mm=\hh^{\perp}$), the metric on $M$ defined by $\la\,,\,\ra=Q|_{\mm\times
 \mm}$ is naturally reductive. In this case $\ggo=\bar{\ggo}=\mm +[\mm,\mm]$.
 \end{thm}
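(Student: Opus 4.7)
The plan is to follow Kostant's original strategy, with the nondegeneracy bookkeeping adapted to the indefinite case.

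For the forward direction I would first dispose of two preliminaries. That $\bar{\ggo}=\mm+[\mm,\mm]$ is an ideal of $\ggo$ follows from the Jacobi identity together with $[\hh,\mm]\subseteq\mm$, which give $[\hh,[\mm,\mm]]\subseteq[\mm,\mm]$ and $[\mm,[\mm,\mm]]\subseteq\bar{\ggo}$; since $\mm\subseteq\bar{\ggo}$, the analytic subgroup $\bar G$ has an orbit through $o$ of full tangent dimension and is therefore transitive on $M$. Writing $\bar{\ggo}=\mm\oplus(\hh\cap\bar{\ggo})$, with $\hh\cap\bar{\ggo}$ spanned by the $\hh$-components $[x,y]_{\hh}$, $x,y\in\mm$, it remains to build $Q$.

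The construction is the core of the proof. I would set $Q|_{\mm\times\mm}=\la\,,\,\ra$ and $Q(\hh\cap\bar{\ggo},\mm)=0$, and then determine $Q$ on $\hh\cap\bar{\ggo}$ by forcing the bi-invariance identity $Q([X,Y],Z)+Q(Y,[X,Z])=0$. Testing it with $X,Y\in\mm$ and $Z\in\hh\cap\bar{\ggo}$ gives
\[
Q([x,y]_{\hh},Z)\;=\;\la[Z,x],y\ra\qquad(x,y\in\mm,\;Z\in\hh\cap\bar{\ggo}),
\]
which is taken as the definition and extended linearly. The main obstacle, and precisely where naturally reductivity is used, is well-definedness: one must show that $\sum_i[x_i,y_i]_{\hh}=0$ in $\hh$ implies $\sum_i\la[Z,x_i],y_i\ra=0$ for every $Z$. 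Combining the skew-symmetry of $\ad(x)|_{\mm}$ for $x\in\mm$ (the naturally reductive condition) with the skew-symmetry of $\ad(A)|_{\mm}$ for $A\in\hh$ (which comes from $\Ad(H)$-invariance of $\la\,,\,\ra$), together with a Jacobi-identity manipulation, produces the needed compatibility. The symmetry $Q(A,B)=Q(B,A)$ on $\hh\cap\bar{\ggo}$ is an analogous calculation.

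It remains to check $\Ad(\bar G)$-invariance, nondegeneracy, and uniqueness. Bi-invariance of $Q$ reduces to the infinitesimal identity in the remaining slot combinations, all of which either reduce to trivialities or to the defining formula. Nondegeneracy on $\hh\cap\bar{\ggo}$ is the indefinite-case twist: if $Q(A,\,\cdot\,)$ vanishes on $\hh\cap\bar{\ggo}$, the formula gives $\la[A,x],y\ra=0$ for all $x,y\in\mm$, hence $\ad(A)|_{\mm}=0$ by nondegeneracy of $\la\,,\,\ra$ on $\mm$; since $[A,\hh\cap\bar{\ggo}]\subseteq\hh\cap\bar{\ggo}$, the line $\RR A$ is then an ideal of $\bar{\ggo}$ lying inside the isotropy algebra, and almost effectiveness forces $A=0$. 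Uniqueness is immediate from the displayed formula. For the converse, if $Q$ is $\Ad(G)$-invariant and nondegenerate on both $\hh$ and $\mm=\hh^{\perp}$, then for $x,y,z\in\mm$
\[
\la[x,y]_{\mm},z\ra\;=\;Q([x,y],z)\;=\;-Q(y,[x,z])\;=\;-\la y,[x,z]_{\mm}\ra,
\]
which is the naturally reductive condition; and the equality $\ggo=\bar{\ggo}$ follows by applying the same almost-effective argument to any element of $\ggo$ that is $Q$-orthogonal to $\bar{\ggo}$.
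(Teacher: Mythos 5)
Your proposal is correct and follows essentially the same route as the paper: the same defining formula $Q([x,y]_{\hh},Z)=\la [Z,x],y\ra$ on $\hh\cap\bar{\ggo}$, with well-definedness and symmetry coming from the curvature-type identity $\la [y,[z,z']_{\hh}],y'\ra=\la [z,[y,y']_{\hh}],z'\ra$ that the paper records as its equation (\ref{b}), and the same use of almost effectiveness for nondegeneracy and for $\ggo=\bar{\ggo}$ in the converse. One small repair in the nondegeneracy step: $[A,\hh\cap\bar{\ggo}]\subseteq\hh\cap\bar{\ggo}$ does not by itself make $\RR A$ an ideal; rather, from $\ad(A)|_{\mm}=0$ and the Jacobi identity one gets $[A,[\mm,\mm]]=0$, so $A$ is central in $\bar{\ggo}=\mm+[\mm,\mm]$ and $\RR A$ is then an ideal contained in the isotropy algebra, which almost effectiveness kills.
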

 
 \begin{proof} Let $x$ denote an element of $\ggo$. If $x\in \hh$ then
 $[x,y]\in \mm$ for all $y\in \mm$ and  clearly $[x,y]\in
 [\mm,\mm]$ if $x\in \mm$. This proves that $\bar{\ggo}$ is an ideal in $\ggo$ and its Lie group is
 transitive on $M$, since $\bar{\ggo}$ contains $\mm$ and any point in $M$
 can be reached by a geodesic $\exp tx \cdot o$ for some $x\in \mm$. Thus
 by replacing $\ggo$ by $\bar{\ggo}$, assume $\ggo=\bar{\ggo}=\mm + [\mm,
 \mm]$. Since one also has $\ggo=\hh \oplus \mm$, the set
 $$S=\{[y,y']_{\hh}\,:\, y,y'\in \mm\}$$
 generates $\hh$ as a vector space. The relation in (\ref{bianchi})
 establishes  that
 \begin{equation}\label{b}
 \la [y,[z,z']_{\hh}], y'\ra=\la [z,[y,y']_{\hh}], z'\ra \qquad \quad y,y',z,z'\in
 \mm.
 \end{equation}
 Each side of (\ref{b}) depends only on $[y,y']$, $[z,z']$ in $S$ and not
 on the choice of $y,y',z,z'\in \mm$. Thus define $Q$ as
 $$Q_{|_{\mm}}:= Q_{|_{\mm\times\mm}}=\la\,,\,\ra\qquad \qquad Q(\hh,\mm)=0$$
 \begin{equation}\label{q}
 \begin{array}{rcl}
 Q([y,y']_{\hh}, [z,z']_{\hh}) & =& -\la [y,[z,z']_{\hh}],y'\ra \\
 &=& -\la [z,[y,y']_{\hh}],z'\ra \qquad y,y',z,z'\in \mm.
 \end{array}
 \end{equation}
 The first equation in (\ref{q}) shows that $Q$ extends uniquely to a linear
 function on $\hh$ in the second variable for fixed $[y,y']_{\hh}\in S$ and
 the second equation in (\ref{q}) shows that $Q$ extends uniquely to a
 symmetric  bilinear form on $\hh \times \hh$. Now we check that $Q$ is
 $\ad$-invariant
 \begin{equation}\label{adinv}
 Q([x,y],z)=-Q(y,[x,z])\qquad \quad \mbox{for all }x, y, z\in \ggo.
 \end{equation}
 For $x,y,z\in \mm$, (\ref{adinv}) follows from the definition of $Q$ and
 the naturally reductivity condition. For $x,y\in \hh$, $z\in \mm$ every side
 in (\ref{adinv}) vanishes, while for $x\in \hh$ and $y,z\in \mm$,
 (\ref{adinv}) follows from the $\Ad$($H$)-invariant condition of
 $\la\,,\,\ra$.
 
 Finally take $x,y\in \hh$ and $z=[w,w']_{\hh}$ where $w,w'\in\mm$. Thus
 $$\begin{array}{rcl}
 Q([x,y], [w,w']_{\hh}) & = & -\la w, [[x,y], w']\ra \\
 & = & \la w, [[w',x],y]]\ra + \la w, [[y,w'],x]\ra \\
 & = & Q([w,[w',x]],y)+ Q([[w,x],w'],y)\\
 & = & Q([x,[w,w']],y)=-Q([x,[w,w']_{\hh}],y)
 \end{array}
 $$
 which proves  (\ref{adinv}) for $x,y,z\in \hh$. We now show that
 $Q_{|_{\hh\times\hh}}$ is nondegenerate. Let $x\in \hh$ and assume that
 $Q(x,y)=0$ for all $y\in \hh$. In particular 
 $$Q(x,[z,z']_{\hh})=0\qquad \quad \mbox{ for 
 all }z,z'\in \mm,$$
 thus $Q(x, [z,z'])=0$ which after the $\ad$-invariance condition already proved, says that 
 $$Q([x,z],z')=0\qquad \forall \, z'\in \mm \quad \Longrightarrow \quad [x,z]=0 \qquad
 \forall \, z\in \mm$$
 and since the action is effective one gets $x=0$.
 
 To prove the converse, notice that $Q$ is nondegenerate on  
 $\bar{\ggo}$ and since it is an ideal, its orthogonal space 
 $$\bar{\ggo}^{\perp}=\{x\in \ggo \,:\,Q(x,y)=0 \, \mbox{ for all } \, y\in
 \bar{\ggo}\}$$
  is also a nondegenerate ideal of $\ggo$ which is contained in $\hh$ and 
  thus $\bar{\ggo}^{\perp}=0$ since the action of $G$ is almost effective. 
 
  Let $\mm=\hh^{\perp}$ and let $\la\,,\,\ra:= Q_{|_{\mm \times\mm}}$. Then $M:=G/H$
  endowed with the $G$-invariant metric induced by $\la \,,\,\ra$ is a 
  naturally reductive space. Since $Q([h,x], \tilde{h})= 
  -Q(x, [h, \tilde{h}])=0$ for all $h,
  \tilde{h}\in \hh$ and $x\in \mm=\hh^{\perp}$, the splitting  $\ggo=\hh
  \oplus \mm$ is a reductive decomposition. Clearly
  $$\la [x,y]_{\mm}, z\ra + \la y,[x,z]_{\mm}\ra=0$$
  follows from the $\ad$-invariant property of $Q$.  
  \end{proof}

 \begin{rem} The action of the Lie group $\bar{G}$ with Lie
 algebra $\bar{\ggo}$ is also transitive whenever $G/H$ is a Lorentzian
 space admitting a homogeneous structure of type 
 $\mathcal T_1 \oplus \mathcal T_3$  and it plays an important role in the results
 in \cite{Me2}. 
 \end{rem}
 
 {\bf Lie algebras with $\ad$-invariant metrics.} An example of a Lie 
  algebra $\ggo$ with an $\ad$-invariant metric is  a 
 semisimple Lie  algebra  together with its Killing form. The
 corresponding Lie group $G$ is a pseudo-Riemannian Einstein space with
 negative scalar curvature. In the other extreme, the abelian Lie group
 $\mathbb T^r \times \mathbb R^s$ endowed with any pseudo-Riemannian
 invariant metric  is flat.
 
 Among others, Lie algebras with $\ad$-invariant metrics can be constructed out from the following data:
 
 \begin{itemize}
 
 \item  a Lie algebra $(\dd, [\cdot, \cdot]_{\dd})$ with an $\ad$-invariant
 metric $\la\,,\,\ra_{\dd}$, 
 
 \item  a Lie algebra $(\hh, [\cdot, \cdot]_{\hh})$ with $\ad$-invariant 
 symmetric bilinear (possibly degenerate) form $\la\, , \, \ra_{\hh}$,
 \item   a Lie algebra homomorphism $\pi: (\hh, [\cdot,\cdot]_{\hh}) \to \Dera(\dd,
 \la\,,\,\ra_{\dd})$ from $\hh$ to the Lie
 algebra of skew-symmetric derivations of  $\dd$. 
 \end{itemize}
 
 Consider the following vector space direct sum
 \begin{equation}\label{vsd}
 \ggo:= \hh \oplus \dd \oplus \hh^*.
 \end{equation}
 
 Let $Q$ be the symmetric bilinear map on $\ggo$, which for $x_i\in \dd$,
 $\alpha_i\in \hh^*$, $h_i\in \hh$, $i=1,2$, is defined by
   \begin{equation}\label{ad-metric}
 Q((h_1, x_1, \alpha_1 ), (h_2, x_2,\alpha_2)) :=
  \la h_1, h_2\ra_{\hh} +
  \la x_1, x_2\ra_{\dd} + \alpha_1(h_2 ) +\alpha_2(h_1);
  \end{equation}
    it is  nondegenerate and of signature
    $sgn(Q)=sgn(\la\,,\,\ra_{\dd})+(\dim \hh, \dim \hh)$.

   Let $\ad_{\hh}$ denote the adjoint action of $\hh$ to itself. The Lie bracket on $\ggo$ is given by    
\begin{equation}\label{bracketg}
\begin{array}{l}
\qquad \qquad [(h_1, x_1, \alpha_1), (h_2, x_2, \alpha_2)] \\
:=([h_1, h_2]_{\hh},
   [x_1, x_2]_{\dd} + \pi(h_1)x_2 - \pi(h_2)x_1, 
   \beta(x_1, x_2) + \ad_{\hh}^*(h_1)\alpha_2 - \ad_{\hh}^*(h_2)\alpha_1)
   \\ \\
  \quad  \mbox{ where } \quad \beta(x_1,x_2)(h):=\la \pi(h) x_1, x_2\ra_{\dd} \quad \mbox{ and }\quad \ad_{\hh}^*(h)\alpha =-\alpha\circ \ad_{\hh}(h).
   \end{array}
   \end{equation}
   
   Some straightforward computations show that the metric $Q$ in (\ref{ad-metric})
   is $\ad$-invariant with respect to this bracket. 
   
  While  $\hh$ is a subalgebra of $\ggo$,  in general $\dd$ is not a 
  subalgebra.  The subspace $\mathcal G(\dd):=\dd \oplus \hh^*$ is always an
   ideal in $\ggo$, which  is obtained as a central extension of $\dd$ 
   by the 2-cocycle $\beta$ and $\ggo$ is the semidirect sum of $\mathcal G(\dd)$ and $\hh$. 
   
   The resulting Lie algebra $\ggo$  is called the {\em double extension} of $\dd$ with respect to $(\hh, \pi)$.

   One says that a Lie algebra $(\ggo, Q)$ equipped with an
   $\ad$-invariant metric is {\em indecomposable} whenever it has no nondegenerate
   ideal. \footnote{Notice that if $\jj$ is a ideal in $\ggo$ then
   $\jj^{\perp}$ is also a ideal of $\ggo$. Hence if $\jj$ is non 
   degenerate then $\ggo$ admits a direct sum as vector spaces
   decomposition:  
   $\ggo=\jj \oplus \jj^{\perp}$. This shows that the definition of
   indecomposibility given here is equivalent to that one given for
   instance in
   \cite{B-K}.}
                  
      The existence of an $\ad$-invariant metric $Q$ on $\ggo$ impose restrictions in the
      algebraic structure of such a $\ggo$: if 
      $(\ggo, Q)$ is indecomposable, then $\ggo$ is
      either 
      \begin{itemize}
      \item one-dimensional  or 
      \item simple or 
      \item it is a double extension
      of $(\dd, \la\,,\,\ra_{\dd})$ with respect to $(\hh, \pi)$ where
      $\hh$ is either one-dimensional or simple.
      \end{itemize}
      
      See the proof  in \cite{M-R}, and also \cite{F-S} for more details in the
      solvable case.
   
   \begin{exa} If $G$ denotes the corresponding simply connected Lie group $G$
   with Lie algebra $(\ggo,Q)$ and we equip $G$ with the corresponding 
   pseudo-Riemannian metric induced by $Q$ and invariant by left and right
   translations, 
   then $G/H$ is a naturally
   reductive pseudo-Riemannian manifold whenever $H$ is trivial  or $H$ is a
   cocompact discrete subgroup, therefore in the last situation $G/H$ is a
   naturally reductive    pseudo-Riemannian compact space.
   \end{exa}

  \section{The naturally reductive pseudo-Riemannian 
   Lie groups $\mathcal G(D)$}   
   
   In this section we produce
   naturally reductive pseudo-Riemannian spaces. We introduce them as an
   application of Theorem
   (\ref{t1}), but we also find a naturally reductive homogeneous
   structure for them.
   
  Consider the following data
  \begin{itemize} 
  \item  $(\hh, [\cdot, \cdot]_{\hh})$ a Lie algebra with  $\ad$-invariant
  metric $\la\,,\,\ra_{\hh}$, 
  \item $(\dd, [\cdot, \cdot]_{\dd})$ a Lie algebra with  $\ad$-invariant
  metric $\la\,,\,\ra_{\dd}$,
  \item $\pi: \hh \to \Dera(\dd, \la\,,\,\ra_{\dd})$ a Lie algebra 
  homomorphism from $\hh$ to the Lie
 algebra of skew-symmetric derivations of  $\dd$.
 \end{itemize}  
 
 This data enables the construction of a Lie algebra $\ggo$ with an 
 $\ad$-invariant
 metric as explained in the previous section. The vector space
  underlying $\ggo$ is the direct sum
 $\ggo=\hh \oplus \dd \oplus \hh^*$ and which is equipped 
 with the Lie bracket showed in (\ref{bracketg}).  Let $Q_-$ denote the next metric  on  $\ggo$: 
     \begin{equation}\label{-ad-metric}
 Q_-((h_1, x_1, \alpha_1), (h_2, x_2, \alpha_2)) :=
 -\la h_1, h_2\ra_{\hh} +
  \la x_1, x_2\ra_{\dd} + \alpha_1(h_2 ) +\alpha_2(h_1).
  \end{equation}

  Let $G$ be a Lie group with Lie algebra $\ggo$ and let $H$ be a closed
 Lie   subgroup of $G$ with Lie algebra $\hh$. Since $Q_-$ is  an
 $\ad$-invariant metric on $\ggo$ and  $\hh$ is nondegenerate for $Q_-$, 
 Theorem
 \ref{t1} says that  the homogeneous manifold {\bf $G/H$ becomes  a naturally reductive pseudo
 Riemannian space}. In fact one has the following decomposition as direct
  sum as vector 
 spaces 
 $$\ggo=\hh \oplus \mm, \qquad \quad \mm=\hh^{{\perp}},$$
 where $\mm$ is the vector subspace of $\ggo$ $Q_-$-orthogonal to 
 $\hh$:
 $$\begin{array}{rcl}
\mm:=\hh^{{\perp}} & = & \{ x\in \ggo\, : \, Q_-(x, \hh)=0\}\\
& = & \dd \oplus \{ \alpha\in \hh^*, \, h \in \hh \, : \, 
\alpha(\tilde{h}) -   \la h, \tilde{h}\ra_{\hh}=0\quad \forall \, 
\tilde{h}\in \hh\} \\
& = & \dd \oplus \{(h,0,\alpha)\,:\, \alpha (\cdot) =  \la h, \cdot\ra_{\hh}\}
\\ & = & \{(h, x,  h^*)\,/\, h\in \hh, x\in \dd\},
\end{array}
$$ 
where  $h^*$  denotes the image of $h\in \hh$ by the linear isomorphism 
$\ell: \hh \to \hh^*$ 
$$\ell:  h \to 
 h^*:=  \la h, \cdot \ra_{\hh}.
$$
With this notation the coadjoint action of $\hh$ on $\hh^*$ is 
$$\ad_{\hh}^*(h_1)h_2^*=[h_1,h_2]^*_{\hh}$$ 
and  the adjoint action of $\ggo$ restricted to $\hh$ acts on  $\mm$ in the
next way
 $$[h_1, (h_2,x, h_2^*)]=([h_1,h_2]_{\hh},\pi(h_1) x ,  [h_1,h_2]_{\hh}^*)\qquad h_1, h_2\in
 \hh, x\in \dd, $$
which shows that $\mm$ is $\ad(\hh)$-invariant. Since $\hh$ is 
nondegenerate,  the
 restriction of $Q_-$ to $\mm$ (also nondegenerate) is given by
 \begin{equation}
 Q_-((h_1,x, h_1^*), (h_2,x, h_2^*))=\la x,y\ra_{\dd}+  \la h_1, h_2\ra_{\hh}
 \end{equation}
 and it makes  $G/H$  a  naturally reductive space. To define completely the
 metric on $G/H$, as usual identify 
 $T_o(G/H)\simeq \mm$ and define the metric on $g\cdot H$ for $g\in G$ in
 such way that the map $\tau(g): x \cdot H \to gx\cdot H$ becomes an isometry. 
 
 \vskip 5pt
 
 Our goal now is to prove that $(\mm, Q_-)$ is the subspace of 
 Definition (\ref{def1}) 
 corresponding to a Lie group provided  with a  naturally 
 reductive pseudo-Riemannian metric for which left-translations by elements of the group are isometries, sometimes called a left-invariant pseudo-Riemannian metric.
 Such a metric is determined by its values at the identity, hence at the Lie algebra level.
 
 \vskip 3pt
 
  Consider the simply connected Lie group $\mathcal G(D)$ with  Lie algebra
  $$\mathcal G(\dd)= \dd \oplus \hh^*$$ where $\hh$ and $\dd$ are taken as in the data 
   in the beginning of this section and $\hh^*$ is the dual 
  space   of $\hh$.
  The Lie bracket in  $\mathcal G(\dd)$  makes of  the 
 canonical  inclusion $\iota:\mathcal G(\dd) \to \ggo$, 
 $\iota(x+h^*)=(0, x,  h^*)\in \ggo$ a Lie algebra monomorphism:
 \begin{equation}\label{brgd}
 [x_1+ h_1^*, x_2+ h_2^*]=[x_1, x_2]_{\dd} +\beta(x_1,x_2).
 \end{equation}
  
  The vector space underlying the Lie algebra $\mathcal G(\dd)$ is 
   isomorphic to $\mm$ via the linear map 
   $\lambda: \mathcal G(\dd) \to \mm$
   given by
  \begin{equation}\label{lambda}
  \lambda: x +h^* \quad  \to \quad (h,x, h^*).
  \end{equation}
  
 Provide $\mathcal G(D)$  with the pseudo-Riemannian metric 
 $\la\,,\,\ra$  obtained by left-translation of the following metric on 
 $\dd \oplus \hh^*$:  
 \begin{equation}\label{natugd}
 \la x_1+ h_1^*, x_2+h_2^*\ra=  \la h_1, h_2\ra_{\hh} +\la x_1,
 x_2\ra_{\dd}.
 \end{equation} 
 This is the only metric on $\mathcal G(\dd)$ which makes  $\lambda$ a
 linear isometry identifying $\mm$ with $\mathcal G(\dd)$, 
 $$\la x_1+ h_1^*, x_2+h_2^*\ra= Q_-((h_1, x_1,  h_1^*),(h_2,x_2, h_2^*))
 \qquad h_1,h_2\in \hh,\,x_1,x_2\in \dd.$$
 
 With respect to this metric $\hh^*$ and $\dd$ are nondegenerate subspaces of $\mathcal G(\dd)$  which are
 at the same time, orthogonal and complementary to each other. 
 Furthermore since 
 $$Q_-((h,0, h^*), (\beta^*(x_1, x_2),  0, \beta(x_1, x_2))
 =Q_-(h, \beta(x_1,x_2))=Q_-(\pi(h) x_1, x_2)$$  
 via application of $\lambda$  one gets the following relation on 
 $\mathcal G(\dd)$
 \begin{equation} \label{cm}
 \la h^*,[x_1,x_2]\ra = \la h^*, \beta(x_1,x_2)\ra_{\hh^*}= \la \pi(h)x_1,
 x_2\ra_{\dd} \qquad \mbox{ for } h\in \hh, \, x_1, x_2\in \dd
 \end{equation}
 where $\la\,,\,\ra_{\hh^*}$ denotes the restriction of $\la
 \,,\,\ra$ to $\hh^*$.
 
 Since $\hh^*\subseteq (\mathcal G(\dd),\la\,,\,\ra)$ is nondegenerate and 
 $\hh^*\subseteq (\ggo,Q_-)$ is degenerate,  the mapping $\iota$ can 
 {\bf not} be an isometry from  
 $\mathcal G(\dd)\to \iota(\mathcal G(\dd))\subseteq \ggo$. However the group 
 $\mathcal G(D)$ acts  as a group of isometries in both pseudo-Riemannian 
 homogeneous spaces $(M,Q_-)$ and $(\mathcal G(D),
 \la\,,\,\ra)$. 
 
  We shall see that the Lie group $H$ acts by orthogonal automorphisms of 
  $(\mathcal G(D), \la\,,\,\ra)$. Since $\mathcal G(D)$  is simply connected 
  we make no distintion between automorphisms of $\mathcal G(D)$ 
  and $\mathcal G(\dd)$. 
 
 Consider the mapping $\mu$ from $\hh$ to $\End(\mathcal G(\dd))$ induced by the action
 on  $\mathcal G(\dd)= \dd \oplus\hh^*$ taken
 as $\pi$  in the first summand and the coadjoint action
 in the second; that
 is  for all 
 $\tilde{h}, h\in \hh, x\in
 \dd$, the action  is given by
 \begin{equation}\label{mu}
 \tilde{h} \cdot ( x+h^*) :=\pi(\tilde{h}) x + 
 \ad_{\hh}^*(\tilde{h})(h^*) =\pi(\tilde{h}) x + [\tilde{h},h]_{\hh}^*. 
 \end{equation}
 
This is an action which provides a Lie algebra homomorphim from  $\hh$ to
 $\End(\mathcal G(\dd))$ acting  by skew-symmetric derivations with respect
   to $\la\,,\,\ra$. In fact on the one hand
$$h\cdot [x_1+h_1^*, x_2+h_2^*]= \pi(h)[x_1, x_2]_{\dd} +
\ad_{\hh}^*(h)\cdot \beta(x_1, x_2)
 $$
and $ \ad_{\hh}^*(h)\cdot \beta(x_1, x_2)(\tilde{h})= - \beta(x_1, x_2)
[h,\tilde{h}]_{\hh}=  \la \pi([\tilde{h},h]_{\hh})x_1, x_2\ra_{\dd}$ (see (\ref{bracketg})).

On the other hand
$$\begin{array}{rcl}
[ h \cdot (x_1+h_1^*),x_2+h_2^*] & = & 
[\pi(h)x_1 +\ad_{\hh}^*(h)(h_1^*),x_2+h_2^*] \\
& = & [\pi(h)x_1+[h,h_1]_{\hh}^*,x_2+ h_2^*]\\
& = &  [\pi(h)x_1, x_2]_{\dd}+\beta(\pi(h)x_1, x_2)
\end{array}
$$  
and similarly
$$[ x_1+h_1^*,h \cdot (x_2+h_2^*)]= 
[x_1,\pi(h)x_2]_{\dd}+\beta(x_1,\pi(h)x_2).$$
By (\ref{bracketg}), $\beta(\pi(h)x_1, x_2)(\tilde{h})=\la
\pi(\tilde{h})\pi(h)x_1,x_2\ra_{\dd}$ therefore
$$(\beta(\pi(h)x_1, x_2)+
\beta(x_1,\pi(h)x_2))(\tilde{h})=\la \pi([\tilde{h}, h])x_1,x_2\ra.
$$ 
Both the coadjoint representation and $\pi$ are homomorphisms from
$\hh$ to $\End(\hh^*)$ and $\Dera(\dd,\la \,,\,\ra_{\dd})$ respectively,
therefore 
 the map of $\hh$ to $\Der(\mathcal G(\dd))$, denoted $\mu$, is a 
 Lie algebra homomorphism. 
 
The paragraphs above show that we achieved the next result.

\begin{thm} \label{t22} The pseudo-Riemannian metric on the Lie group
 $\mathcal G(D)$
induced by left-translations of $\la\,,\,\ra$ in (\ref{natugd}), makes of it
 a naturally reductive pseudo-Riemannian manifold.

Specifically it admits a transitive action by isometries of the Lie group whose Lie
algebra is the semidirect sum
$$\ggo=\hh \oplus \mathcal G(\dd)\qquad \mbox{ with } \hh \subseteq
\Der(\mathcal G(\dd))\cap \sso(\mathcal G(\dd)):=
\Dera(\mathcal G(\dd)).$$
\end{thm}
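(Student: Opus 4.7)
The plan is to assemble the observations made in this section into a single statement, by combining Theorem \ref{t1} applied to $(\ggo, Q_-)$ with the identification $\mathcal G(D) \cong G/H$ provided by the semidirect decomposition.

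First I would invoke Theorem \ref{t1}: the metric $Q_-$ in (\ref{-ad-metric}) is $\ad$-invariant on $\ggo$ and nondegenerate on $\hh$, so if $G$ is the simply connected Lie group with Lie algebra $\ggo$ and $H \subset G$ is the analytic subgroup with Lie algebra $\hh$ (closed because the subalgebra $\mathcal G(\dd)$ is an ideal complement), then $G/H$ with the metric induced by $Q_-|_{\mm \times \mm}$ is a naturally reductive pseudo-Riemannian space, with reductive complement $\mm = \hh^{\perp}$ as explicitly computed in the excerpt.

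Next I would identify $G/H$ with $\mathcal G(D)$. Since $\ggo = \hh \oplus \mathcal G(\dd)$ is a semidirect sum with $\mathcal G(\dd)$ an ideal, the simply connected analytic subgroup of $G$ corresponding to $\mathcal G(\dd)$ coincides with $\mathcal G(D)$ and acts freely and transitively on $G/H$ through the orbit map at $o = eH$. This yields an equivariant diffeomorphism $\varphi : \mathcal G(D) \to G/H$ whose differential at the identity is precisely the map $\lambda$ of (\ref{lambda}) followed by the canonical identification $\mm \cong T_o(G/H)$. Because $\lambda : (\mathcal G(\dd), \la\,,\,\ra) \to (\mm, Q_-|_{\mm})$ is a linear isometry by the very definition (\ref{natugd}), the pull-back by $\varphi$ of the homogeneous metric on $G/H$ is the left-invariant metric on $\mathcal G(D)$ determined by (\ref{natugd}); consequently $(\mathcal G(D), \la\,,\,\ra)$ is naturally reductive.

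Finally, for the structural claim about $\ggo$, I would note that under $\lambda$ the restriction of $\ad_{\ggo}(h)$ to $\mm$ transports to the operator $\mu(h)$ of (\ref{mu}) acting on $\mathcal G(\dd) = \dd \oplus \hh^*$, namely $\pi(h)$ on $\dd$ and the coadjoint action on $\hh^*$. The computation carried out just before the theorem shows $\mu(h) \in \Der(\mathcal G(\dd))$, and the $Q_-$-skew-symmetry of $\ad_{\ggo}(h)$ (guaranteed by the $\ad$-invariance of $Q_-$) combined with the isometry property of $\lambda$ yields skew-symmetry of $\mu(h)$ relative to $\la\,,\,\ra$. Hence $\mu(\hh) \subseteq \Dera(\mathcal G(\dd))$ and $\ggo = \hh \oplus \mathcal G(\dd)$ realizes the required semidirect sum acting transitively by isometries.

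The main obstacle is the second step: one must be careful that the metric transports correctly under $\varphi$ despite the fact, emphasized in the excerpt, that the inclusion $\iota : \mathcal G(\dd) \hookrightarrow \ggo$ is \emph{not} an isometry (because $\hh^*$ is degenerate for $Q_-$ but nondegenerate for $\la\,,\,\ra$). The resolution is that the correct identification of $T_o(G/H)$ is with $\mm$, not with $\iota(\mathcal G(\dd))$, and on $\mm$ the map $\lambda$ supplies the needed isometry by its construction.
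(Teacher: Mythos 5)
Your proposal is correct and follows essentially the same route as the paper: apply Theorem \ref{t1} to $(\ggo,Q_-)$, transport the metric from $\mm=\hh^{\perp}$ to $\mathcal G(\dd)$ via the linear isometry $\lambda$, and verify that $\mu(\hh)$ consists of skew-symmetric derivations so that $\ggo=\hh\oplus\mathcal G(\dd)$ is the claimed semidirect sum of isometries. You are in fact more explicit than the paper about the equivariant diffeomorphism $\mathcal G(D)\cong G/H$ and about why the non-isometric inclusion $\iota$ causes no trouble, which is a welcome clarification rather than a deviation.
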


\begin{proof} We have already proved that $\hh$ acts by derivations on
$\mathcal G(\dd)$ and it
is not hard to prove that $\mu(h)$ is skew-symmetric with respect to
$\la\,,\,\ra$ for every $h\in \hh$.
Therefore $\hh \subseteq \Dera (\mathcal G(\dd))$ which is  a
subalgebra of the Lie algebra of the isotropy group of isometries of 
$(\mathcal G(\dd), \la\,,\,\ra)$. Since
$\mu:\hh \to \Der(\dd)$ is a homomorphism, actually $\mathcal G(\dd)$ is 
 an ideal in
$\ggo$ so that one gets the semidirect sum structure 
$\ggo=\hh \oplus \mathcal G(\dd)$. Finally the construction of the metric $\la \,,\,\ra$ on $\mathcal G(\dd)$ via $\lambda$ shows that the 
conditions of
(\ref{def1}) hold, and therefore  $(\mathcal G(D),\la\,,\,\ra)$ is a 
naturally
reductive pseudo-Riemannian homogeneous space. 
\end{proof}

\begin{exa} \label{exar2} The low dimensional non-abelian examples of the
 the construction in Theorem (\ref{t22}) arise by starting with the abelian
 Lie algebra  $\aa$ spanned by the vectors $e_1,e_2$. It can be equipped 
 with the  metrics:
 \begin{itemize}
\item  $\la e_1, e_1\ra_+=1=\la e_2, e_2\ra_+$,\quad  $\la e_1, e_2\ra_+=0.$
\item $-\la e_1, e_1\ra_-=1=\la e_2, e_2\ra_-$, \quad  $\la e_1, e_2\ra_-=0.$
\end{itemize}
For the metric $\la \,,\,\ra_+$  the space  of skew-symmetric maps is generated by 
$$t_+=\left(
\begin{matrix}
0 & -1 \\ 1 & 0 \end{matrix} \right)
$$
while for $\la\,,\,\ra_-$ the space  of skew-symmetric maps is generated by
$$t_-=\left(
\begin{matrix}
0 & 1 \\ 1 & 0 \end{matrix} \right).
$$
Let $\RR^{p,q}$ denote the abelian Lie algebra 
equipped with the metric  of signature $(p,q)$. In each 
case considered above, the Lie algebra $\mathcal G(\RR^{p,q})$ has dimension three; one has $\mathcal G(\RR^{p,q})=\aa \oplus \RR
e_3$ and the Lie bracket
$$[e_1, e_2]=e_3$$
 identifies the Heisenberg Lie algebra of dimension three $\hh_3$. The
metric on $\hh_3$ is defined in such way that $e_3$ is orthogonal to $\aa$
giving rise to the next possibilities:

\begin{itemize}
\item $\la e_i, e_i\ra_0 = 1,$ for all i,

\item $\la e_i, e_i\ra_1 = 1=-\la e_3, e_3\ra_1$ \quad  for i=1,2, 

\item $-\la e_1, e_1\ra_2 = 1 = \la e_2, e_2\ra_2 = \la e_3, e_3\ra_2$,

\item $-\la e_1, e_1\ra_3 = 1 = \la e_2, e_2\ra_3 = -\la e_3, e_3\ra_3$.
\end{itemize}

The corresponding pseudo-Riemannian metrics invariant by left-translations,
on the Heisenberg Lie group $H_3$ are naturally reductive. The Lie algebra of
 the isometry group is $\RR \ltimes \hh_3$. 
\end{exa}

\vskip 3pt

\begin{center}{\bf A naturally reductive homogeneous structure on $\mathcal G(\dd)$}
\end{center}

\vskip 7pt

 Our
goal now is to prove the existence of a  homogeneous structure of type 
$\mathcal T_3$ on   the simply
connected Lie group $\mathcal G(D)$. 

Recall that a {\em homogeneous structure} on a connected simply connected
pseudo-Riemannian manifold $(M,g)$ is 
a (1,2)  tensor  field on $M$ satisfying the conditions of the next
theorem, first proved in \cite{A-S}.

\begin{thm}  A connected complete and simply connected Riemannian manifold
$(M,g)$ is homogeneous if and only if there exists a tensor field T of type 
(1,2) on $M$ such that

\vskip 3pt

{\rm (i)} $g(T_x y, z)+g(y,T_xz)=0$

\vskip 2pt

{\rm (ii)} $(\nabla_xR)(y,z)=[T_x,R(y,z)]-R(T_xy,z)-R(y,T_xz)$

\vskip 2pt

{\rm (iii)} $(\nabla_x)T_y=[T_x,T_y]-T_{T_xy}$

\vskip 3pt

for $x,y,z\in \xi(M)$, where $\nabla$ denotes the Levi-Civita connection
of $(M,g)$ and $R$ the corresponding curvature tensor.
\end{thm}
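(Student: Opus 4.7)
The theorem splits into two implications, with a common conceptual pivot: conditions (i)--(iii) should be read as saying that the auxiliary connection $\tilde\nabla := \nabla - T$ is a \emph{metric connection whose torsion and curvature are both parallel}. More precisely, (i) says $\tilde\nabla$ is metric, (ii) says $\tilde\nabla R = 0$, and (iii) says $\tilde\nabla T = 0$. I would make this reformulation the hinge of the proof and handle the two directions separately, taking necessity first.

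\textbf{Necessity.} Suppose $M = G/H$ with $G$ a connected transitive group of isometries. In the Riemannian setting a reductive decomposition $\ggo = \hh \oplus \mm$ exists by $\Ad(H)$-invariant averaging. Let $\nabla^c$ denote the canonical connection of this reductive structure; it is $G$-invariant, metric, and has $\nabla^c$-parallel torsion and curvature. Define $T_X Y := \nabla_X Y - \nabla^c_X Y$. Since every $G$-invariant tensor field is $\nabla^c$-parallel and both $\nabla$ and $\nabla^c$ are $G$-invariant, one has $\nabla^c T = 0$ and $\nabla^c R = 0$. Rewriting these identities in terms of $\nabla = \nabla^c + T$ via the standard expansion of the covariant derivative of a tensor yields exactly (ii) and (iii); condition (i) is immediate from metricity of both connections.

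\textbf{Sufficiency.} Given $T$ satisfying (i)--(iii), set $\tilde\nabla := \nabla - T$. Straightforward computations show that $\tilde\nabla$ is metric (from (i)), $\tilde\nabla R = 0$ (from (ii)), and $\tilde\nabla T = 0$ (from (iii)); the torsion $\tilde\Theta(X,Y) = T_Y X - T_X Y$ and curvature $\tilde R$ of $\tilde\nabla$ are then also $\tilde\nabla$-parallel, as they are polynomial expressions in $R, T, \nabla T$. Fix a base point $o \in M$ and let $\hh \subseteq \sso(T_o M)$ be the holonomy algebra of $\tilde\nabla$ at $o$. Following Nomizu, equip $\ggo := \hh \oplus T_o M$ with a Lie bracket extending that of $\hh$, in which $\hh$ acts on $T_o M$ via its inclusion into $\End(T_o M)$, and the $\mm$-component bracket $[u,v]$ for $u,v \in T_o M$ is assembled from $\tilde R_o(u,v) \in \hh$ and $\tilde\Theta_o(u,v) \in T_o M$. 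The Jacobi identity reduces to the first and second Bianchi identities for the metric connection $\tilde\nabla$ combined with $\tilde\nabla \tilde R = \tilde\nabla \tilde\Theta = 0$. Integrate $\ggo$ to a connected simply connected Lie group $G$ with closed subgroup $H$ whose Lie algebra is $\hh$. A Cartan--Ambrose--Hicks development argument, using completeness of $\tilde\nabla$ (inherited from that of $\nabla$) and simple connectedness of $M$, then exhibits $G$ as a transitive group of isometries with $M \cong G/H$.

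\textbf{Main obstacle.} The substantive step is the globalization in the sufficiency direction: promoting the algebraic data $(\tilde R_o, \tilde\Theta_o, \hh)$ at a single point to a genuine transitive action on $M$. This is precisely where completeness and simple connectedness intervene essentially, through the monodromy-free development of $\tilde\nabla$-geodesics along loops based at $o$. The Jacobi identity verification, though formally mechanical, also requires careful bookkeeping with the Bianchi identities of a metric connection with nonzero torsion, which is why parallelism of \emph{both} $\tilde R$ and $\tilde\Theta$ is needed rather than either alone.
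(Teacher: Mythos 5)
The paper does not actually prove this statement: it is quoted as the classical Ambrose--Singer theorem, with the citation \cite{A-S} standing in for the argument, so there is no internal proof to compare yours against. Judged on its own terms, your outline is the standard proof (Ambrose--Singer, in the form systematized by Nomizu and by Tricerri--Vanhecke): necessity via the canonical connection $\nabla^c$ of a reductive decomposition, using that $G$-invariant tensors are $\nabla^c$-parallel and expanding $\nabla^c=\nabla-T$; sufficiency via the transvection construction on $\hh\oplus T_oM$ followed by a development argument. This is sound in outline, and it is the argument the paper implicitly relies on.

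Two points in the sufficiency direction deserve more care than your sketch gives them. First, completeness of $\tilde\nabla$ is not ``inherited'' from $\nabla$ in the sense of shared geodesics: without condition (iv) the two connections have different geodesics. What one actually uses is that $\tilde\nabla$ is metric, so its geodesics have constant $g$-speed and hence extend to all of $\RR$ on a metrically complete manifold (Hopf--Rinow); that is the correct justification. Second, for the Jacobi identity and for the bracket on $\hh\oplus T_oM$ to close, you need the holonomy algebra $\hh$ of $\tilde\nabla$ at $o$ to be spanned by the curvature operators $\tilde R_o(u,v)$; this is where $\tilde\nabla\tilde R=0$ enters through the Ambrose--Singer holonomy theorem, and it should be stated explicitly rather than folded into ``following Nomizu.'' Finally, note that the paper writes $\tilde\nabla:=T-\nabla$ in its conditions (i)'--(iii)' (presumably a sign slip for $\nabla-T$, since the difference of a tensor and a connection is not a connection); your convention $\tilde\nabla=\nabla-T$ is the standard one and is the one consistent with (i)'--(iii)' as stated.
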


If, in addition, $T$ satisfies the following condition (iv), then $M$ is 
a naturally reductive homogeneous space:

\vskip 3pt

{\rm (iv)} $T_x x = 0$ for all $x\in \xi(M)$.

\vskip 3pt

And the converse holds, that is, if $(M,g)$ is naturally reductive, then
there is a (1,2) tensor on $M$ satisfying (i)-(iv). This was proved in the
Riemannian case by Tricerri and Vanhecke \cite{T-V1} (see also
\cite{T-V2}) and generalized to
the pseudo-Riemannian case by Gadea and Oubi\~na (Proposition 4.1 in 
\cite{G-O2}). 
The tensor $T$ is called a {\em naturally reductive homogeneous structure} on
$M$ or a {\em homogeneous structure of type $\mathcal T_3$}.

Let $\tilde{\nabla}$ define on $(M,g)$ by $\tilde{\nabla}:=T-\nabla$, then the conditions
(i)-(iii) above can be rewriten in the following way:

\vskip 3pt

{\rm (i)'} $\tilde{\nabla}g=0$

\vskip 2pt

{\rm (ii)'} $\tilde{\nabla} R=0$

\vskip 2pt

{\rm (iii)'} $\tilde{\nabla}T=0$.

\vskip 3pt

 Since the metric $\la \,,\,\ra$ on $\mathcal G(D)$
is invariant by translations on the left, to give a naturally reductive homogeneous structure for the
groups $\mathcal G(D)$, it suffices to define such a $T$
 for left-invariant
vector fields, that is, on $\mathcal G(\dd)$. 

In the next section we shall study in more detail some geometric properties of
$\mathcal G(D)$. For now, we make use of the Levi Civita connection
$\nabla$ and the
curvature tensor, which for elements in $\mathcal G(\dd)$ are 
 given by
$$
 \nabla_{x_1+h_1^*}x_2 +h_2^*= \frac12 ([x_1, x_2] -\pi(h_1) x_2
-\pi(h_2) x_1) \qquad  x_i\in \dd, h_i^*\in \hh^*,\, i=1,2.
$$
and $R(w_1,w_2)=[\nabla_{w_1}, \nabla_{w_2}]-\nabla_{[w_1,w_2]}$,
respectively.

\begin{thm} \label{t33} Let $\mathcal G(D)$ be the simply connected Lie 
group endowed
with the pseudo-Riemannian metric obtained by left-translations of  $\la\,,\,\ra$
 on its Lie algebra $\mathcal G(\dd)$ (\ref{natugd}). The following tensor
 $T$ defines a naturally reductive structure on $\mathcal G(D)$:
 \begin{equation}\label{tt1}
 T_x y=\frac12 \lambda^{-1}([\lambda x, \lambda y]_{\mm})
 \end{equation}
 explicitly
 \begin{equation}\label{tt2}
T_{x_1+h_1^*}
x_2+h_2^*=\frac12( [x_1,x_2]+\pi(h_1)x_2-\pi(h_2)x_1 )+[h_1,h_2]_{\hh}^*. 
\end{equation}
\end{thm}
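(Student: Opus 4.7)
The plan is to exploit the linear isometry $\lambda : \mathcal G(\dd) \to \mm$ together with the fact, established in Theorem \ref{t22}, that $\mathcal G(D) = G/H$ is naturally reductive with respect to the decomposition $\ggo = \hh \oplus \mm$, $\mm = \hh^{\perp}$. The guiding idea is that the tensor $T$ in (\ref{tt1}) is, via $\lambda$, the left-invariant transport to $\mathcal G(D)$ of the canonical homogeneous structure $S_u v = \frac12[u,v]_{\mm}$ on the reductive coset space $G/H$.

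First I would derive (\ref{tt2}) from (\ref{tt1}) by direct computation. Writing $\lambda(x_i+h_i^*) = (h_i,x_i,h_i^*)$ and applying (\ref{bracketg}), together with the identity $\ad_{\hh}^*(h_1)h_2^* - \ad_{\hh}^*(h_2)h_1^* = 2[h_1,h_2]_{\hh}^*$ (a consequence of $\ad$-invariance of $\la\,,\,\ra_{\hh}$), one obtains
\[
[\lambda(x_1{+}h_1^*),\lambda(x_2{+}h_2^*)] = \bigl([h_1,h_2]_{\hh},\, [x_1,x_2]_{\dd}+\pi(h_1)x_2-\pi(h_2)x_1,\, \beta(x_1,x_2)+2[h_1,h_2]_{\hh}^*\bigr).
\]
Since $\mm = \{(h,x,h^*) : h\in\hh,\; x\in\dd\}$, the $\mm$-component of a generic $(h_0,x_0,\alpha_0)\in\ggo$ is $(\ell^{-1}(\alpha_0),x_0,\alpha_0)$; applying $\lambda^{-1}$ to this $\mm$-component and using $[x_1,x_2]_{\dd}+\beta(x_1,x_2)=[x_1,x_2]_{\mathcal G(\dd)}$ from (\ref{brgd}) then yields (\ref{tt2}) after multiplication by $\frac12$.

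Next I would verify the four defining conditions of a naturally reductive homogeneous structure. Property $T_x x = 0$ is immediate from the antisymmetry of the bracket, and the skew-symmetry $\la T_x y, z\ra + \la y, T_x z\ra = 0$ follows from the $\ad$-invariance of $Q_-$ on $\ggo$ combined with $Q_-([\lambda x,\lambda y]_{\mm},\lambda z) = Q_-([\lambda x,\lambda y],\lambda z)$, valid because $\lambda z\in\mm = \hh^{\perp}$ annihilates the $\hh$-part of $[\lambda x,\lambda y]$. For the two remaining conditions, which involve $\nabla R$ and $\nabla T$, the efficient route is to invoke the general theory: the canonical tensor $S_u v = \frac12[u,v]_{\mm}$ on $M = G/H$ satisfies all four Ambrose-Singer conditions as a consequence of the naturally reductive property (Tricerri-Vanhecke \cite{T-V1}, Gadea-Oubi\~na \cite{G-O2}). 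Since $\lambda$ coincides with the differential at the identity of the orbit map $\mathcal G(D)\to M$, $g\mapsto g\cdot o$ (verified by computing the $\mm$-component of $(0,x,h^*)\in\ggo$ and matching with the defining formula (\ref{lambda})), and since $T$ is left-invariant on $\mathcal G(D)$ while $S$ is $\mathcal G(D)$-invariant on $M$, the remaining conditions transfer from $S$ to $T$ through the isometric identification.

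The main obstacle is the identification bookkeeping: one must carefully establish that the Levi-Civita connection, the curvature, and the tensor $T$ on $\mathcal G(D)$ correspond via $\lambda$ to their counterparts on $M=G/H$, so that the Ambrose-Singer conditions satisfied by $S$ on $M$ pull back to the analogous identities for $T$ on $\mathcal G(D)$. Once this compatibility is in place, the theorem reduces to the standard naturally reductive structure on $G/H$ constructed in Theorem \ref{t22}.
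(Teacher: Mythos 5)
Your proposal is correct, but it reaches the two substantive conditions by a genuinely different route than the paper. For condition (i) and $T_xx=0$ you argue exactly as the paper does, via the linear isometry $\lambda$ and the $\ad$-invariance of $Q_-$ together with $Q_-(\hh,\mm)=0$; your derivation of (\ref{tt2}) from (\ref{tt1}), using $\ad_{\hh}^*(h_1)h_2^*-\ad_{\hh}^*(h_2)h_1^*=2[h_1,h_2]_{\hh}^*$ and the fact that the $\mm$-component of $(h_0,x_0,\alpha_0)$ is $(\ell^{-1}(\alpha_0),x_0,\alpha_0)$, is sound and is actually more detail than the paper supplies (the paper simply states both formulas). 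The divergence is in conditions (ii) and (iii): the paper computes $\tilde{\nabla}=T-\nabla$ explicitly, namely $\tilde{\nabla}_{x_1+h_1^*}(x_2+h_2^*)=\pi(h_1)x_2+[h_1,h_2]_{\hh}^*$, and then verifies $\tilde{\nabla}T=0$ and $\tilde{\nabla}R=0$ by direct calculation, using the Jacobi identity, the representation property of $\pi$, and the identity $[h_1,\beta^*(x,y)]_{\hh}^*=\beta(\pi(h_1)x,y)+\beta(x,\pi(h_1)y)$. You instead invoke the converse of the Ambrose--Singer theorem in the pseudo-Riemannian naturally reductive setting (Proposition 4.1 of \cite{G-O2}) applied to $G/H$ from Theorem \ref{t22}, and transfer the canonical structure $S_uv=\frac12[u,v]_{\mm}$ through the orbit map $\mathcal G(D)\to G/H$. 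That transfer is legitimate: the orbit map intertwines left translations with the $\mathcal G(D)$-action, its differential at $e$ is $\lambda$ (so it is an isometry, both metrics being invariant and agreeing at one point), and $T$ and the pullback of $S$ agree at $e$ and are both $\mathcal G(D)$-invariant, hence equal; the conditions (i)--(iv) are tensorial identities in the metric, its curvature and $T$, so they pull back. The one point you should make explicit is that you need not merely the \emph{existence} of a $\mathcal T_3$ structure on $G/H$ but that the canonical one has precisely the form $\frac12[u,v]_{\mm}$ with the paper's sign conventions ($\nabla\leftrightarrow-\frac12[x,y]_{\mm}$, $\nabla^c\leftrightarrow-[x,y]_{\mm}$); this is indeed what the Tricerri--Vanhecke/Gadea--Oubi\~na construction produces. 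Your route is shorter and conceptually cleaner at the cost of leaning on the general theorem; the paper's computation is self-contained and yields the explicit $\tilde{\nabla}$, which it displays and reuses.
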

\begin{proof} The  equality (\ref{tt1}) proves (i). In fact 
via the linear isometry $\lambda: \mathcal G(\dd) \to \mm$ condition (i) is equivalent to 
$$Q_-([\lambda x,\lambda y]_{\mm}, \lambda z)+Q_-(\lambda y, [\lambda
x,\lambda z]_{\mm})=0 \qquad \mbox{ for all }x,y,z\in
\mathcal G(\dd).$$

One clearly has $T_x x=0$ for all $x\in \mathcal G(\dd)$.

By taking  $\tilde{\nabla}=T-\nabla$  gives
 \begin{equation}\label{t3}
\tilde{\nabla}_{x_1+h_1^*}
x_2+h_2^*=\pi(h_1)x_2 +[h_1,h_2]_{\hh}^*.
\end{equation}

We show that $\tilde{\nabla}T=0$. On the one hand  one has
\begin{equation}\label{nt1}
\begin{array}{rcl}
\tilde{\nabla}_{x_1+h_1^*}(T_{x_2+h_2^*}x_3+h_3^*)&=&
\tilde{\nabla}_{h_1^*}(\frac12\pi(h_2)x_3-
\frac12\pi(h_3)x_2+\frac12[x_2,x_3]+[h_1,h_2]_{\hh}^*)\\
&=& \frac12 \pi(h_1)\pi(h_2)
x_3-\frac12\pi(h_1)\pi(h_3)x_2+
  \frac12 \pi(h_1)[x_2,x_3]_{\dd}\\ && + \frac12[h_1,
\beta^*(x_2,x_3)]_{\hh}^*+[h_1,[h_2,h_3]_{\hh}]_{\hh}^*
\end{array}
\end{equation}
and on the other hand
\begin{equation}\label{nt2}
\begin{array}{rcl}
T_{\tilde{\nabla}_{x_1+h_1^*}x_2+h_2^*}x_3+h_3^* & = & T_{
\pi(h_1) x_2 +[h_1, h_2]^*_{\hh}}x_3+h_3^*\\
& = &
\frac12[\pi(h_1)x_2,x_3]
-\frac12 \pi(h_3)\pi(h_1) x_2+\frac12\pi([h_1,h_2]_{\hh})x_3\\
& & +[[h_1,h_2]_{\hh},h_3]_{\hh}^*
\end{array}
\end{equation}
\begin{equation}\label{nt3}
\begin{array}{rcl}
T_{x_2+h_2^*}(\tilde{\nabla}_{x_1+h_1^*}x_3+h_3^* )& = & 
T_{x_2+h_2^*}(\pi(h_1)x_3+[h_1,h_3]^*_{\hh})\\
& = &
\frac12[x_2,\pi(h_1)x_3]
-\frac12\pi([h_1,h_3]_{\hh})x_2+\frac12 \pi(h_2)\pi(h_1) x_3\\
& & +[h_2,[h_1,h_3]_{\hh}]_{\hh}^*
\end{array}
\end{equation}
Hence by computing (\ref{nt1})-(\ref{nt2})-(\ref{nt3}) we see that 
$\tilde{\nabla} T$
vanishes: by using  the Jacobi identity in $\hh$, the fact
that $\pi$ is a representation of $\hh$ to $\End(\dd)$ acting by 
skew-symmetric derivations of $\dd$ and the relation
\begin{equation}\label{use}
[h_1, \beta^*(x,y)]_{\hh}^*=\beta(\pi(h_1)x,y)+\beta(x,\pi(h_1)y)
\end{equation}
which derives from the definition of $\beta$. All these facts should be also used
to prove $\tilde{\nabla}R=0$. We exemplify here one case, looking at the
formulas for the curvature tensor in the next section.

\begin{equation}\label{nr1}
\begin{array}{rcl}
\tilde{\nabla}_{x_1+h_1^*}(R(x_2,x_3)h^*)& = & 
-\frac14 [h_1, \beta^*(x_2,\pi(h)x_3)]_{\hh}^*
-\frac14[h_1,\beta^*(\pi(h)x_2,x_3)]_{\hh}^*\\
&& +\frac14 \pi(h_1)\pi(h)[x_2,x_3]_{\dd}
\end{array}
\end{equation}
\begin{equation}\label{nr2}
\begin{array}{rcl}
R(\tilde{\nabla}_{x_1+h_1^*}x_2,x_3)h^*& = & 
-\frac14 \beta(\pi(h_1)x_2,\pi(h)x_3) 
-\frac14 \beta(\pi(h)\pi(h_1)x_2,x_3)\\
&& +\frac14\pi(h)[\pi(h_1)x_2,x_3]_{\dd}
\end{array}
\end{equation}
\begin{equation}\label{nr3}
\begin{array}{rcl}
R(x_2,\tilde{\nabla}_{x_1+h_1^*}x_3)h^*& = & 
-\frac14 \beta(x_2,\pi(h)\pi(h_1)x_3) 
-\frac14 \beta(\pi(h)x_2,\pi(h_1)x_3)\\
&& +\frac14\pi(h)[x_2,\pi(h_1)x_3]_{\dd}
\end{array}
\end{equation}
\begin{equation}\label{nr4}
\begin{array}{rcl}
R(x_2,x_3)\tilde{\nabla}_{x_1+h_1^*}h^*& = & 
-\frac14 \beta(\pi([h_1,h]_{\hh})x_2,x_3) 
-\frac14 \beta(x_2,\pi([h_1,h]_{\hh})x_3)\\
&& +\frac14\pi([h_1,h]_{\hh})[x_2,x_3]_{\dd}
\end{array}
\end{equation}
The expression (\ref{nr1})-(\ref{nr2})-(\ref{nr3})-(\ref{nr4}) vanishes,
which follows from  similar
arguments as above, and  also  from the next relations
$$
\begin{array}{rcl}
[h_1, \beta^*(x_2,\pi(h)x_3)]_{\hh}^* & = &
\beta(\pi(h_1)x_2,\pi(h)x_3)+\beta(x_2,\pi(h_1)\pi(h)x_3)\\ \\
{[h_1, \beta^*(\pi(h)x_2,x_3)]_{\hh}^* }& = & 
\beta(\pi(h_1)\pi(h)x_2,x_3)+\beta(\pi(h)x_2,\pi(h_1)x_3)
\end{array}
$$
\end{proof}

\subsection{Naturally reductive Riemannian nilmanifolds.}
In the next paragraphs we explain the construction of Riemannian nilmanifolds
according to Theorem (\ref{t22}). The translation of our model to that one given in
\cite{La1} is basically due to the equivalence between the  adjoint and the coadjoint
representations on a compact Lie algebra. We also summarize some known results
concerning these spaces. See  \cite{Go, La1} and references therein for more
details.

 The simply connected naturally
reductive Riemannian nilmanifolds without Euclidean factor arise from the
 following data set:

\begin{itemize}
\item a compact Lie algebra $\kk=\bar{\kk}\oplus \cc$ with $\bar{\kk}=[\kk,\kk]$ and
$\cc$ the center of $\kk$;

\item a faithful representation $(\pi,V)$ of $\kk$  without trivial
subrepresentations; 

\item an $\ad_{\kk}$-invariant inner product $\la\,,\,\ra_{\kk}$  
 on $\kk$ and  a $\pi(\kk)$-invariant inner product
$\la\,,\,\ra_{V}$  on $V$.
\end{itemize}

Let $\nn$ denote the vector space direct sum $\nn=V \oplus \kk^*$ 
and  equip $\nn$ with the metric $\la\,,\,\ra$ obtained as in
(\ref{natugd}).
The Lie bracket on 
$\nn= V\oplus \kk^*$ satisfies $[\kk^*,\nn]_{\nn}=0$, $[V, V] \subseteq \kk$
and 

\begin{equation}\label{brac}
\la [u,v], k^*\ra_{\kk^*} = \la \pi(k) u, v\ra_{V}\qquad \quad \mbox{ for } k\in \kk, u, v\in
V.
\end{equation}

Since $(\pi, V)$ has no trivial subrepresentations, the center of $\nn$ 
coincides with $\kk^*$ and since  $(\pi, V)$ is faithful, the commutator 
of $\nn$ is $\kk^*$: $C(\nn)=\kk^*$. 
Take $N$ the simply connected 2-step nilpotent Lie group with Lie algebra 
$\nn$ and endow it with the Riemannian metric invariant by a left-action determined by 
$\la \,,\,\ra$.

 The Lie group $(N, \la\,,\,\ra)$ is 
 naturally reductive and with the notations of this section  it coincides 
 with the group $\mathcal G(V)$ (by identifying $V$ with an abelian Lie
 algebra).

In the case of Riemannian nilmanifolds, the isotropy group
of isometries coincides with the group of orthogonal automorphisms (see
\cite{Wi}). Its Lie algebra $\kk= \Der(\nn)\cap \sso(\nn,
\la\,,\,\ra)$ for naturally reductive simply connected nilmanifolds, 
 is given by 
\begin{equation}\label{nato}
\kk=\bar{\kk} \oplus \uu, \qquad [\bar{\kk},\uu]=0,
\end{equation}
where $\uu=\End_{\kk}(V)\cap \sso(V,\la\,,\,\ra_V)$ and $\End_{\kk}(V)$ denotes the set of intertwinning operators of the
representation $(\pi,V)$ of $\kk$.

\begin{itemize}

\item $\bar{\kk}$ acts on
$\nn= V\oplus \kk^*$ by $(\pi(k), \ad^*(k))$ for all $k\in \bar{\kk}$,  

\item $\uu$ acts trivially on $\kk^*$.

\end{itemize}

 On the simply connected naturally reductive nilmanifold  $(N, \la\,,\,\ra)$
  a naturally reductive homogeneous structure is given by
 
  $$T_{v_1+k_1^*}{v_2+k^*_2}= \frac12(\pi(k_1) v_2-\pi(k_2)v_1) + 
  \frac12 \beta(v_1,v_2) +[k_1, k_2]^*_{\kk}$$
  
  as shown in \cite{La2}. Moreover whenever $N$ has no Euclidean factor,
  such a naturally reductive homogeneous structure is unique.

\section{About the geometry of $(\mathcal G(D), \la\,,\,\ra)$}
 
\vskip 5pt 

 Since $\la\,,\,\ra$ is invariant by left-translations, the covariant derivative $\nabla$ is
 also invariant by left-translations and one can see it as a bilinear map $\nabla:\mathcal
 G(\dd) \times \mathcal G(\dd) \to \mathcal G(\dd)$. To get it explicitly
 one applies the Koszul formula for $w_1, w_2,  w_3\in \mathcal G(\dd)$:
 $$2 \la\nabla_{w_1}w_2, w_3\ra=\la [w_1,w_2],
 w_3\ra-\la[w_2,w_3],w_1\ra+\la[w_3,w_1],w_2\ra.$$

By computing, if $w_i=x_i+h_i^* \in \dd \oplus \hh^*$ for i=1,2,3, one obtains
that the second term in the right side of the Koszul formula is
$$
\begin{array}{rcl}
  \la \beta(x_2, x_3), h_1^*\ra+  \la [x_2,x_3]_{\dd}, x_1\ra 
& = & \la\pi(h_1) x_2, x_3\ra + \la [x_2,x_3]_{\dd}, x_1\ra \\
& = & \la\pi(h_1) x_2, x_3\ra + \la [x_1,x_2]_{\dd}, x_3 \ra 
 \end{array}
$$ 
 and the third term is
 $$
 \begin{array}{rcl}
 \la \beta(x_3, x_1), h_2^*\ra+ \la [x_3,x_1]_{\dd}, x_2\ra 
  & = & \la \pi(h_2) x_3, x_1\ra + \la [x_3,x_1]_{\dd}, x_2\ra\\
  & = & - \la \pi(h_2) x_1, x_3\ra + \la [x_1,x_2]_{\dd}, x_3\ra
 \end{array} 
 $$  
where we are making use of (\ref{cm}) and the fact that $\la \,,\,\ra_{\dd}$ is
$\ad_{\dd}$-invariant. Therefore  the Levi Civita connection is
\begin{equation}\label{levic}
 \nabla_{x_1+h_1^*} x_2+h_2^*= \frac12 ([x_1, x_2] -\pi(h_1) x_2
-\pi(h_2) x_1) \qquad h_i^*\in \hh^*, x_i\in \dd,\, i=1,2.
\end{equation}

Using this, the curvature tensor defined by
$$R(w_1,w_2)w_3 =[\nabla_{w_1},\nabla_{w_2}]w_3-\nabla_{[w_1,w_2]}w_3$$
follows
\begin{itemize}
\item for $x,y,z\in \dd$:
$$R(x,y)z= \frac12 \pi(\beta^*(x,y))z -
\frac14 \pi(\beta^*(y,z))x -\frac14 \pi(\beta^*(z,x))y - \frac14
[[x,y]_{\dd},z]$$
\item for $x_1,x_2\in \dd$, $h^*\in \hh^*$:
$$
\begin{array}{rcl}
R(x_1,x_2) h^* & = & -\frac14 \beta(x_1, \pi(h) x_2) -\frac14 
\beta(\pi(h)x_1, x_2)
+ \frac14 \pi(h) [x_1,x_2]_{\dd}\\ \\
R(x_1,h^*) x_2 & = & -\frac14 [x_1,\pi(h) x_2] +\frac14 \pi(h)[x_1, x_2]_{\dd}\\
& = & [\frac14 \pi(h) x_1,x_2]_{\dd} - \frac14 \beta(x_1, \pi(h)x_2)
\end{array}
$$
\item for $x\in \dd$, $h_1^*, h_2^*\in \hh^*$:
$$
\begin{array}{rcl}
R(x,h_1^*) h_2^* & = & -\frac14  \pi(h_1) \pi(h_2) x\\ \\
R(h_1^*,h_2^*) x & = &  \frac14 \pi([h_1,h_2]_{\hh}) x
\end{array}
$$
\item for $h_i^*\in \hh^*$  i=1,2,3 \qquad $R(h_1^*, h_2^*) h_3^*= 0$, 

 \end{itemize}   
 
 where we use the convention:  if $z\in
 \hh^*$ then $z^*=\ell^{-1}(z)\in \hh$  denotes the image  by
 $\ell^{-1}:\hh^* \to \hh$.
 
 \vskip 3pt
 
 The curvature tensor of $\dd$ equipped with the $\ad_{\dd}$-invariant
 metric $\la\,,\,\ra_{\dd}$ is given by  $R^d(x,y)=-\frac 14
 \ad_{\dd}([x,y]_{\dd})$. Hence the curvature tensor of $\dd$ and $\mathcal G(\dd)$ are related by
 the formula
 $$R(x,y)z= \frac12 \pi(\beta^*(x,y))z - \frac14 \pi(\beta^*(y,z))x -
 \frac14 \pi(\beta^*(z,x))y +\frac14 \beta(z, [x,y]_{\dd}) +R^d(x,y) z.$$
 
 Let $\Pi\subseteq \mathcal G(\dd)$ denote a plane and let $Q$ be the real
 number obtained by computing 
 $$Q(x,y)=\la x, x\ra \la y,y\ra-\la x,y \ra^2\qquad x, y\, \mbox{ basis 
 of } \Pi.$$
 The plane $\Pi$ is nondegenerate if and only if $Q(x,y)\neq 0$ for one
 (hence every) basis of $\Pi$ \cite{ON}. Take $x,y$ be a orthonormal basis
 of a plane $\Pi$, that is, a linearly independent set $\{x,y\}$ such 
 that
 $\la x,y\ra =0$, $\la x,x\ra=\varepsilon_1$, $\la
 y,y\ra=\varepsilon_2$, where $\varepsilon_i=\pm 1$ for i=1,2. The
 sectional curvature of $\Pi$ is given by
 $$K(x,y)= \varepsilon_1 \varepsilon_2\la R(x,y)y, x\ra \qquad
 $$
 which, after the formulas above for the curvature tensor, gets:
 $$
 K(x,y)=\left\{
 \begin{array}{ll}
 \varepsilon_1\varepsilon_2(\frac14 \la [x,y]_{\dd}, [x,y]_{\dd}\ra 
 -\frac34 \la \beta(x,y),\beta(x,y)\ra) & \mbox{ for }x,y\in \dd\\ \\
 \frac{\varepsilon_1\varepsilon_2}4 \la \pi(y^*)x, \pi(y^*)x\ra & \mbox{
 for } x\in \dd, y\in \hh^*\\ \\
 0 & \mbox{ for } x,y \in \hh^*.
 \end{array}
 \right.
 $$  
 
\begin{exa} Let $\la \,,\,\ra_i$ i=1,2, denote the Lorentzian metrics on the
Heisenberg Lie group $H_3$ in Example (\ref{exar2}). 
One verifies that for $\la \,,\, \ra_1$ the sectional curvature 
is nonpositive, while for $\la \,,\,\ra_2$ it could take both positive and negative
values, for instance $K(e_1,e_2) >0$ and $K(e_1,e_3)<0$.
\end{exa} 

 The Ricci tensor, defined as $\Ric(x,y)=\tr( z \to R(z,x)y)$ for
  $x,y \in \mathcal G(\dd)$, becomes a symmetric bilinear form on
 $\mathcal G(\dd)$; this is due to the symmetries of the
 curvature tensor. Hence 
 there exists a symmetric linear transformation $T:\mathcal G(\dd) \to
 \mathcal G(\dd)$ satisfying 
 $$\Ric(x,y)=\la  Tx,y\ra \qquad \quad \mbox{for all }x,y\in
 \mathcal G(\dd)$$ and which is called the {\em Ricci transformation}.
  If $\{e_k\}$ denotes a orthonormal basis of $\mathcal G(\dd)$ such that
  $\la e_k, e_k\ra=\varepsilon_k$, then
  $$\Ric(x,y)=\sum_k \varepsilon_k \la R(e_k,x)y, e_k\ra=\la-\sum_k
  \varepsilon_k \la R(e_k,x)e_k, y\ra$$
         and this implies
	 $$T(x) = -\sum_k  \varepsilon_k  R(e_k,x)e_k \qquad \mbox{ for all }x\in
	 \mathcal G(\dd).$$
Let $\{z_i^*\}$ denote a orthonormal basis of $\hh^*$ with
$\varepsilon_i=\la z_i^*, z^*_i\ra$ and $\{d_j\}$ denote a
orthonormal basis of $\dd$ with $\varepsilon_j=\la d_j, d_j\ra$. 
By computing one obtains

\begin{itemize}
\item for $x\in \dd$, $h^*\in \hh^*$:
$$\Ric(x,h^*)=\frac14 \sum_j \varepsilon_j \la \beta(d_j,[d_j,x]_{\dd}), h^*\ra= 
-\frac14 \tr(\pi(h) \ad_{\dd}(x))$$
%\frac14 \sum_j \varepsilon_j \la \ad^2(d_j)(x), h^*\ra	$$
\item for $x,y \in \dd$:
$$\Ric(x,y)=\frac12 \sum_i \varepsilon_i\la \pi^2(z_i)x,y\ra-\frac14
\tr(\ad_{\dd}(x)\circ\ad_{\dd}(y))$$
%\sum_j \varepsilon_j \la \ad^2_{\dd}(d_j)(x), y^*\ra$$
\item for $h_1^*,h_2^*\in \hh^*$:
$$\Ric(h_1^*,h_2^*)= -\frac14 \tr(\pi(h_1) \pi(h_2)).$$
%\frac14 \sum_j \varepsilon_j \la \beta(\pi(h_1)d_j, d_j),h_2^*\ra$$
\end{itemize} 
	
To get these relations, one makes use of the formulas for the curvature
tensor. For $x,y\in \dd$ one needs to work a little more. By writing
$\beta(d_j, x)=\sum_i \varepsilon_i \la \beta(d_j,x),z_i^*\ra z_i^*$ one has
$$\begin{array}{rcl}
\sum_j \varepsilon_j \la \pi(\beta^*(d_j,x)) d_j, y\ra & = & 
\sum_j \varepsilon_j \sum_i \varepsilon_i \la \beta(d_j,x),z_i^*\ra
\la \pi(z_i)d_j, y\ra \\
& = & \sum_i \varepsilon_i \la \pi(z_i) ( \sum_j \varepsilon_j  \la \pi(z_i)d_j,x\ra 
d_j),  y\ra\\
& = & \sum_i \varepsilon_i \la \sum_j \varepsilon_j \la \pi(z_i)x,d_j\ra
d_j, \pi(z_i) y\ra \\
& = & \sum_i \varepsilon_i \la \pi(z_i) x, \pi(z_i) y\ra\\
& = & - \sum_i \varepsilon_i \la \pi^2(z_i)x,y\ra
\end{array}
$$

where also $\sum_j \varepsilon_j \la \pi(z_i)x,d_j\ra d_j = \pi(z_i)x.$
Thus the Ricci transformation is

$$
\begin{array}{rcll}
T(h^*) & = & \frac14 \sum_j \varepsilon_j [d_j, \pi(h)d_j] & h^*\in \hh^*\\ \\
T(x) & = & \frac12 \sum_i \varepsilon_i \pi(z_i)^2x - \frac14 \sum_j \varepsilon_j\ad^2(d_j)x
 & x\in \dd.\end{array}
$$

\begin{rem} Note that whenever $\dd$ is abelian, (and therefore $\mathcal
G(\dd)$ is 2-step nilpotent) the Ricci transformation preserves the
decomposition $\dd \oplus \hh^*$, that is $T(\hh^*)\subseteq \hh^*$ and
$T(\dd)\subseteq \dd$.
\end{rem}

Let $\mathcal S(D)$ denote a connected subgroup of $\mathcal G(D)$ with Lie
algebra $\mathcal S(\dd)\subseteq \mathcal G(\dd)$. Since translations on
the left by elements of $\mathcal S(D)$ are
isometries of $\mathcal G(D)$ then  $\mathcal S(D)$ 
is  totally geodesic if and only if it is totally geodesic at the
identity. Therefore
$\mathcal S(D)\subseteq \mathcal G(D)$ is a totally
geodesic submanifold of $\mathcal G(D)$ if and only if $\nabla_u v\in
\mathcal S(\dd)$ whenever $u,v\in \mathcal S(\dd)$.

\vskip 3pt

\begin{defn} A Lie algebra $\mathcal S(\dd)\subseteq \mathcal G(\dd)$ is
totally geodesic if $\nabla_u v\in \mathcal S(\dd)$ for all $u,v\in
\mathcal S(\dd)$.
\end{defn}
 
 \begin{exa} The Lie subalgebra $\hh^*$ is a flat totally real Lie algebra.
 \end{exa}

\begin{exa} Let $\xi=x+h^*\in \mathcal G(\dd)$ be arbitrary. The
1-parameter subgroup $\exp(t\xi)$ is a geodesic if and only if $\pi(h)x=0$.
In particular if $\xi\in \hh^*$ or $\xi\in \dd$ the curve $t\to \exp(t\xi)$
is a geodesic starting at the identity.
\end{exa}

Now we would like to investigate some  groups  acting by isometries on  
$(\mathcal G(D), \la\,,\,\ra)$. Let $\Iso(\mathcal G(D))$ denote the group of all isometries of
$\mathcal G(D)$.   It is well known that the group $\Iso(\mathcal G(D))$, endowed with the compact-open
topology, is a Lie group, even a Lie transformation group of $\mathcal G(D)$. 
For $u \in \mathcal G(D)$ let $L_u$ denote the left-translation by $u\in \mathcal G(D)$ and let $\mathcal L$ denote the
subgroup of all left-translations. Let $\mathcal F$ denote the
stability subgroup of the identity element $e$ (see \cite{Mu}).

 \begin{lem} $\mathcal F$ is a closed subgroup of $\mathcal G(D)$,
 $\mathcal L$ is a closed connected and simply connected Lie subgroup
 isomorphic to $\mathcal G(D)$ and
 $\Iso(\mathcal G(D))=\mathcal F \mathcal L$, where $\mathcal L \cap \mathcal
 F=\{id\}$. 
 Each element of $\mathcal F$ is determined by its differential at
 $e$. 
 
 Let $\Or_{aut}(\mathcal G(D))$ denote the group of isometric automorphisms
 and let $\Iso_{spl}(\mathcal G(D))$ denote the subgroup of 
 $\Iso(\mathcal G(D))$  preserving the splitting $T\mathcal G(D)=\hh^* \mathcal G(D) \oplus \dd
 \mathcal G(D)$.
 
 Then $\Iso_{aut}(\mathcal G(D))= \Or_{aut}(\mathcal G(D)) \oplus \mathcal G(D)$ 
 semidirect sum and  it holds
  $$\Iso_{aut}(\mathcal G(D))\subseteq \Iso_{spl}(\mathcal G(D)) 
  \subseteq \Iso(\mathcal  G(D)).$$
   \end{lem}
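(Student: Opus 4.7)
The plan is to invoke the Myers--Steenrod theorem (extended to pseudo-Riemannian manifolds) so that $\Iso(\mathcal{G}(D))$ is a Lie transformation group; then $\mathcal{F}$ is closed as the stabilizer of $e$. To realize $\mathcal{L}$, I would embed $\mathcal{G}(D)$ via $u\mapsto L_u$ into $\Iso(\mathcal{G}(D))$: this is a continuous injective group homomorphism (injectivity follows from $L_u(e)=u$), and the evaluation map $\phi\mapsto \phi(e)$ restricted to $\mathcal{L}$ provides a continuous inverse, so $\mathcal{L}$ is a closed Lie subgroup of $\Iso(\mathcal{G}(D))$ isomorphic to the simply connected group $\mathcal{G}(D)$. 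Triviality of $\mathcal{L}\cap \mathcal{F}$ is immediate since $L_u$ fixes $e$ if and only if $u=e$; and for any $\phi\in \Iso(\mathcal{G}(D))$, setting $u=\phi(e)$ yields $\phi = L_u\circ (L_u^{-1}\phi)$ with the second factor in $\mathcal{F}$, giving the factorization $\Iso(\mathcal{G}(D))=\mathcal{L}\cdot \mathcal{F}$.

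For the rigidity statement that elements of $\mathcal{F}$ are determined by their differential at $e$, I would invoke the classical fact that on a connected pseudo-Riemannian manifold an isometry is uniquely determined by its value and its differential at one point: if $\phi_1,\phi_2\in\mathcal{F}$ agree at $e$ with equal differentials, then $\psi:=\phi_2^{-1}\phi_1$ fixes $e$ with trivial differential, hence commutes with $\exp_e$ and equals the identity on a normal neighborhood of $e$; the set of points where $\psi$ agrees with the identity is open and closed, so by connectedness of $\mathcal{G}(D)$ it is the whole group.

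For the semidirect decomposition of $\Iso_{aut}(\mathcal{G}(D))$ I would use the intertwining identity $\psi\circ L_u = L_{\psi(u)}\circ \psi$ valid for every automorphism $\psi$ of $\mathcal{G}(D)$ and every $u\in\mathcal{G}(D)$; it shows that $\mathcal{L}$ is normalized by $\Or_{aut}(\mathcal{G}(D))$. The intersection $\Or_{aut}(\mathcal{G}(D))\cap \mathcal{L}$ consists of left translations that are automorphisms, i.e.\ only the identity, which assembles the claimed semidirect product structure. The chain $\Iso_{aut}(\mathcal{G}(D))\subseteq \Iso_{spl}(\mathcal{G}(D)) \subseteq \Iso(\mathcal{G}(D))$ reduces to checking that every generator of $\Iso_{aut}(\mathcal{G}(D))$ preserves the left-invariant splitting $T\mathcal{G}(D) = \hh^* \mathcal{G}(D) \oplus \dd\, \mathcal{G}(D)$: for left translations this is automatic since both distributions are left-invariant by construction, and for an isometric automorphism $\psi$ it suffices to verify $\psi(\hh^*)=\hh^*$ at the Lie algebra level, after which $\psi(\dd)=\dd$ follows by orthogonality because $\psi$ preserves $\la\,,\,\ra$.

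The principal obstacle I anticipate is establishing the invariance $\psi(\hh^*)=\hh^*$ for an arbitrary orthogonal automorphism $\psi$ of $\mathcal{G}(\dd)$. From the bracket (\ref{brgd}) one reads off that $\hh^*$ is central in $\mathcal{G}(\dd)$, so the natural route is to characterize $\hh^*$ intrinsically as (a piece of) the center of $\mathcal{G}(\dd)$; however the full center in general contains $\{x\in Z(\dd):\pi(\hh)x=0\}\oplus \hh^*$, so one needs either a nondegeneracy hypothesis on $\pi$ forcing the first summand to vanish, or a refinement using the derived/lower central filtration to pin $\hh^*$ down inside the center. Once this invariance is secured the remaining assertions reduce to routine bookkeeping.
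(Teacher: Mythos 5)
The paper gives no proof of this lemma at all: it is stated as a consequence of M\"uller's structure theory for isometries of Lie groups with invariant metrics (the reference \cite{Mu} in the preceding sentence), so there is no argument in the text to compare yours against step by step. Your treatment of the first block of assertions is the standard one and is essentially complete: Myers--Steenrod (in its pseudo-Riemannian form) gives the Lie group structure on $\Iso(\mathcal G(D))$ and closedness of the stabilizer $\mathcal F$; the factorization $\phi=L_{\phi(e)}\circ\bigl(L_{\phi(e)}^{-1}\phi\bigr)$ gives $\Iso(\mathcal G(D))=\mathcal F\mathcal L$ with $\mathcal F\cap\mathcal L=\{id\}$; the $1$-jet rigidity of isometries of a connected pseudo-Riemannian manifold gives the determination of elements of $\mathcal F$ by their differential at $e$; and the intertwining relation $\psi\circ L_u=L_{\psi(u)}\circ\psi$ gives the semidirect sum $\Iso_{aut}=\Or_{aut}\ltimes\mathcal L$. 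One minor repair: your argument that $\mathcal L$ is closed is not quite finished --- the continuity of the evaluation map on $\mathcal L$ shows $\mathcal L$ is embedded, not closed; you should add that if $L_{u_n}\to\phi$ in the compact-open topology then $u_n=L_{u_n}(e)\to\phi(e)=:u$, hence $L_{u_n}(v)=u_nv\to uv$ for every $v$, so $\phi=L_u\in\mathcal L$.

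The genuine gap is the one you flag yourself: the inclusion $\Iso_{aut}(\mathcal G(D))\subseteq\Iso_{spl}(\mathcal G(D))$ needs every orthogonal automorphism $\varphi$ of $\mathcal G(\dd)$ to satisfy $\varphi(\hh^*)=\hh^*$ (after which $\varphi(\dd)=\dd$ follows since $\dd=(\hh^*)^{\perp}$ for $\la\,,\,\ra$), and you do not establish this. From the bracket (\ref{brgd}) the center of $\mathcal G(\dd)$ is $\hh^*\oplus\bigl(\zz(\dd)\cap\bigcap_{h\in\hh}\ker\pi(h)\bigr)$, and when the second summand is nonzero an orthogonal automorphism could a priori mix the two pieces, so ``preserves the center'' does not by itself yield ``preserves $\hh^*$''. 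Closing this requires either an extra hypothesis on the data (as the paper imposes in the Riemannian nilmanifold case, where $\pi$ faithful and without trivial subrepresentations forces $\zz(\nn)=\kk^*$) or an invariant characterization of $\hh^*$ inside the center using the derived or descending central filtration. It is worth noting that the paper itself glosses over exactly this point immediately after the lemma, when it asserts ``Since $\varphi$ preserves the center, \dots we can write it as $\varphi=\psi+\phi$''; your proposal is at least explicit about the step that remains to be justified, but as written the proof of the final chain of inclusions is incomplete.
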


We shall study here the case of  isometries preserving the subspaces 
$\hh^*$ and $\dd$. 

Let $\varphi$ denote an isometry which leaves invariant the subspaces 
$\hh^*$ and $\dd$; 
write  $\varphi= \psi + \phi$ where $\psi\in\Or(\hh^*,\la\,,\,\ra_{\hh^*})$  and $\phi\in \Or(\dd,
\la\,,\,\ra_{\dd})$. It 
is clear that the restriction of $\varphi$ to $\dd$ must be an
isometry of $\dd$, hence according to \cite{Mu}, $\phi$ must satisfy 
$$\phi[x,[y,z]]_{\dd}=[\phi x, [\phi y, \phi z]]_{\dd} \qquad \mbox{ for all } x, y, z
\in \dd.$$ 
By using this, whenever $\psi [x,[y,z]]_{\hh^*}=[\phi x, [\phi y, \phi
z]]_{\hh^*}$
for all $x,y,z\in \dd$, by  computing $\la \psi[x,[y,z]]_{\hh^*}, \psi h^*\ra= \la
[x,[y,z]], h^*\ra$ one gets

\vskip 4pt 

$\la \pi(\psi h^*) \phi x, [\phi y, \phi z]_{\dd}\ra = \la \pi(h) x,
[y,z]_{\dd}\ra$ and therefore 

\vskip 4pt 

$\ad_{\dd}(\phi z) \circ  \pi( \psi h)=\phi \circ \pi(h) \circ
\psi^{-1} \quad \mbox{ for all } h\in \hh, z\in \dd.$

\vskip 4pt

Now we compute the group of orthogonal automorphisms  of $\mathcal G(D)$.
 Since $\mathcal G(D)$ is simply connected we see it at the Lie algebra 
 level. 
Let $\varphi:\mathcal G(\dd) \to \mathcal G(\dd)$ be  a
orthogonal automorphism. Since $\varphi$ preserves the center,  as above 
we can
write it as $\varphi=\psi + \phi$. One computes that  
$\la [\phi x_1, \phi x_2], \psi h^*\ra = \la [x_1,x_2], h^*\ra$
so one has 
$$\begin{array}{rcl}
\la \beta(\phi x_1, \phi x_2), \psi h^*\ra & = & \la \beta(x_1,x_2), h^*\ra
\end{array}
$$ getting
 \begin{equation}\label{is2}
 \pi(\psi h) =\phi\pi(h)\phi^{-1} \quad \quad \mbox{ for all } h\in \hh,
 \end{equation}
while
$$\la [\phi x_1, \phi x_2], \phi x_3 \ra_{\dd} = \la [x_1,x_2], x_3 \ra_{\dd}$$
implies that $\phi\in \Or(\dd, \la\,,\,\ra_{\dd})\cap \Aut(\dd)$.

\begin{prop} \label{iso} The group of orthogonal automorphisms of $\mathcal G(\dd)$
consists of elements
$$\psi+\phi\in \Or(\hh^*,\la\,,\,\ra_{\hh^*}) \times 
(\Or(\dd, \la\,,\,\ra_{\dd})\cap \Aut(\dd))\,:\, 
\phi \pi(h) \phi^{-1}=\pi(\psi h)\mbox{ for all } h\in \hh.$$

Its Lie algebra $\sso_{aut}(\mathcal G(\dd))$ is therefore
$$A+B \in \sso(\hh^*,\la\,,\,\ra_{\hh^*}) \times 
\Dera(\dd)\quad :\quad 
[B, \pi(h)]=\pi(Ah) \quad \mbox{ for all } h\in \hh.$$
In particular $\pi(\hh) \subseteq \sso_{aut}(\mathcal G(\dd))$.

\end{prop}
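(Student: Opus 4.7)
The plan is to decompose an orthogonal automorphism $\varphi$ of $\mathcal G(\dd)$ with respect to the splitting $\dd\oplus \hh^*$ and then read off the compatibility between its two blocks $\psi$ and $\phi$ from the bracket formulas (\ref{brgd}) and (\ref{cm}); much of the needed computation has already appeared in the paragraphs preceding the statement.

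First I would establish that $\varphi$ respects the splitting. From (\ref{brgd}) the subspace $\hh^*$ is a central ideal of $\mathcal G(\dd)$, in particular contained in the centre, so it is preserved by $\varphi$; orthogonality together with $\dd=(\hh^*)^{\perp}$ then forces $\varphi(\dd)=\dd$. Writing $\varphi=\psi+\phi$ accordingly with $\psi\in\Or(\hh^*,\la\,,\,\ra_{\hh^*})$ and $\phi\in\Or(\dd,\la\,,\,\ra_{\dd})$, applying $\varphi$ to the bracket $[x_1,x_2]=[x_1,x_2]_{\dd}+\beta(x_1,x_2)$ for $x_1,x_2\in\dd$ and matching the $\dd$- and $\hh^*$-components yields $\phi\in\Aut(\dd)$ together with the cocycle equivariance $\psi\,\beta(x_1,x_2)=\beta(\phi x_1,\phi x_2)$.

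Next I would use (\ref{cm}) to translate the cocycle equivariance into the desired compatibility. Pairing both sides with $\psi h^*$, invoking (\ref{cm}), and then transporting $\phi$ across using its orthogonality gives $\la\pi(h)x_1,x_2\ra_{\dd}=\la\phi^{-1}\pi(\psi h)\phi x_1,x_2\ra_{\dd}$, hence $\phi\,\pi(h)\,\phi^{-1}=\pi(\psi h)$, which is precisely (\ref{is2}). The converse direction---that any such pair $(\psi,\phi)$ assembles into an orthogonal automorphism---is a direct check against the same formulas. The Lie algebra description then follows by differentiating a one-parameter subgroup $\varphi_t=\psi_t+\phi_t$ at $t=0$: block orthogonality passes to $A\in\sso(\hh^*)$ and $B\in\Dera(\dd)$, and differentiating $\phi_t\,\pi(h)\,\phi_t^{-1}=\pi(\psi_t h)$ yields $[B,\pi(h)]=\pi(Ah)$.

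For the final inclusion $\pi(\hh)\subseteq\sso_{aut}(\mathcal G(\dd))$ I would unpack the action $\mu$ of (\ref{mu}): for $h\in\hh$, the endomorphism $\mu(h)$ acts as $\pi(h)\in\Dera(\dd)$ on the $\dd$-factor and as $\ad_{\hh}^*(h)$ on the $\hh^*$-factor, which lies in $\sso(\hh^*,\la\,,\,\ra_{\hh^*})$ because $\la\,,\,\ra_{\hh}$ is $\ad$-invariant; the compatibility $[\pi(h),\pi(h')]=\pi([h,h']_{\hh})$ is then nothing but the homomorphism property of $\pi$. The main obstacle, and the step requiring the most care, is the very first one: for the decomposition $\varphi=\psi+\phi$ to be available one needs $\varphi$ to preserve $\hh^*$, which is automatic when $\hh^*$ coincides with the full centre of $\mathcal G(\dd)$ and otherwise must be secured by identifying $\hh^*$ through a Lie-algebraic invariant respected by automorphisms.
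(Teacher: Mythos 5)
Your proposal follows essentially the same route as the paper: decompose $\varphi=\psi+\phi$ using the centrality of $\hh^*$ together with orthogonality, apply $\varphi$ to the bracket (\ref{brgd}) and use (\ref{cm}) to extract $\phi\,\pi(h)\,\phi^{-1}=\pi(\psi h)$ along with $\phi\in \Or(\dd,\la\,,\,\ra_{\dd})\cap\Aut(\dd)$, and differentiate a one-parameter family for the Lie algebra statement. The caveat you raise at the end is apt --- the paper likewise passes from ``$\varphi$ preserves the centre'' to the block decomposition without comment, which strictly requires $\hh^*$ to be distinguished inside the centre --- but otherwise your argument matches the paper's.
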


\begin{rem} The isometry group of a bi-invariant metric was studied 
in \cite{Mu}. Isotropy groups of compact and non compact type can be
obtained starting with an abelian Lie algebra provided with different
metrics. The resulting Lie group is 2-step nilpotent. Moreover whenever the center
is nondegenerate the stability group coincides with the group of orthogonal
automorphisms \cite{C-P} (see \cite{Ov} for  examples). 
\end{rem}

\section{Examples and applications}
The examples considered in (\ref{exar2}) appeared in \cite{Ov} while naturally
reductive Riemannian nilmanifolds were extended studied in the past. For those cases 
(\ref{t22}) and (\ref{t33}) offer a new point of view.  
 An original contribution   is the production of several
 Lie groups equipped with naturally reductive metrics (and the corresponding homogeneous structures) which have no analogous
 in the Riemannian case.  Explicitly we shall
 construct examples in dimension six which are 4-step nilpotent. We shall also
 study the algebraic structure of these objects.

Let $\mathfrak a(p,q)$ denote the double extension Lie algebra of $\RR^{p,q}$
via the skew-symmetric map $A\in \sso(p,q)$.  Thus $\mathfrak a(p,q)$ is
solvable and scalar flat (see \cite{B-K}). It is indecomposable if $\ker A$ 
is totally  isotropic.
 
Let $\mathcal G(\RR^{p,q})$ denote the naturally reductive Lie algebra constructed
fron $\RR^{p,q}$ via $A$ after Theorem (\ref{t22}). Let $z$
be an element generating the one-dimensional vector space complementary to
$\RR^{p,q}$. The Lie bracket on $\mathcal G(\RR^{p,q})$ is given by
$$[x,y]=\la Ax, y\ra z \qquad \quad \mbox{ for all }x,y\in \RR^{p,q}.$$
 
 The center of $\mathcal G(\RR^{p,q})$ is
 $$\zz(\mathcal G(\RR^{p,q}))=\RR z \oplus \ker A$$
 and if $\tilde{\zz} \subseteq \ker A$ is a nondegenerate subspace in 
 $\RR^{p,q}$ then $\mathcal G(\RR^{p,q})$ is decomposable.

\begin{prop} Any Lie algebra $\mathcal G(\RR^{p,q})$ is 2-step nilpotent. Moreover 

\begin{itemize}

\item If $A$ is nonsingular then $\mathcal G(\RR^{p,q})$ is the Heisenberg Lie algebra
$\hh_{2s+1}$ with $2s=p+q$.

\item If $A$ is singular then  $\mathcal G(\RR^{p,q})$ is a 
central extension of a Heisenberg Lie algebra $\hh_{2s+1}$ with $2s \le p+q$,
and $\mathcal G(\RR^{p,q})$ is indecomposable  if
$\ker A$ is totally isotropic.
\end{itemize}
The signature of the metric on $\mathcal G(\RR^{p,q})$ is $(p,q+1)$, or 
$(p, q+1)$, depending on the sign  of  $z$.

Moreover if the center of $\mathcal G(\RR^{p,q})$ is nondegenerate the 
stability-group of isometries consists of the orthogonal automorphisms 
\cite{C-P}, hence they can be computed as in (\ref{iso}).
\end{prop}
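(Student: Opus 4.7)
The plan is to unfold the general construction of Section~3 in the special case $\dd=\RR^{p,q}$ abelian, $\hh=\RR$, with $\pi$ sending a chosen unit generator of $\hh$ to $A$. Writing $z$ for the dual generator of $\hh^*$, formula (\ref{brgd}) collapses to
\[
[x_1,x_2]=\la Ax_1,x_2\ra_\dd\, z,\qquad [z,\mathcal G(\RR^{p,q})]=0,
\]
so that $[\mathcal G(\RR^{p,q}),\mathcal G(\RR^{p,q})]\subseteq \RR z\subseteq \zz(\mathcal G(\RR^{p,q}))$, giving two-step nilpotency. A direct check using nondegeneracy of $\la\,,\,\ra_\dd$ yields $\zz(\mathcal G(\RR^{p,q}))=\ker A\oplus\RR z$.

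To get the Heisenberg identification I would consider the skew form $\omega(x,y):=\la Ax,y\ra_\dd$ on $\RR^{p,q}$, whose radical is exactly $\ker A$. If $A$ is nonsingular then $\omega$ is symplectic, forcing $p+q=2s$, and the bracket is the standard Heisenberg law, so $\mathcal G(\RR^{p,q})\cong \hh_{2s+1}$. If $A$ is singular, $\omega$ descends to a nondegenerate symplectic form on $V:=\RR^{p,q}/\ker A$ of even dimension $2s\le p+q$, and the natural surjection $\mathcal G(\RR^{p,q})\to V\oplus\RR z\cong \hh_{2s+1}$ has central kernel $\ker A$, realising $\mathcal G(\RR^{p,q})$ as the claimed central extension.

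For indecomposability I would argue by contradiction: assume $\ker A$ is totally isotropic and $\mathcal G(\RR^{p,q})=J_1\oplus J_2$ as an orthogonal sum of nontrivial nondegenerate ideals. Since the $J_i$ are ideals with $J_1\cap J_2=0$, the cross-bracket $[J_1,J_2]$ vanishes, so
\[
\RR z=[\mathcal G(\RR^{p,q}),\mathcal G(\RR^{p,q})]=[J_1,J_1]+[J_2,J_2].
\]
One-dimensionality of $\RR z$ forces one summand, say $J_2$, to be abelian, and then $[\mathcal G(\RR^{p,q}),J_2]=0$, i.e.\ $J_2\subseteq \zz(\mathcal G(\RR^{p,q}))=\ker A\oplus\RR z$. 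Since $z\in [J_1,J_1]\subseteq J_1$ and $J_1\cap J_2=0$, necessarily $J_2\subseteq\ker A$, contradicting nondegeneracy of $J_2$ because $\ker A$ is totally isotropic. This is the step I expect to be the main obstacle, because the metric on $\mathcal G(\RR^{p,q})$ is only left-invariant and not ad-invariant, so the standard double-extension arguments are unavailable and one must exploit directly the one-dimensionality of $[\mathcal G(\RR^{p,q}),\mathcal G(\RR^{p,q})]$.

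Finally, the signature computation is immediate from (\ref{natugd}): the metric splits orthogonally into $\la\,,\,\ra_\dd$ of signature $(p,q)$ on $\dd$ and $\la z,z\ra=\pm 1$ on $\RR z$, producing $(p+1,q)$ or $(p,q+1)$ according to the sign. The last assertion reduces to observing that the center $\ker A\oplus \RR z$ is nondegenerate iff $\ker A$ is, in which case \cite{C-P} identifies the isotropy subgroup of $\Iso(\mathcal G(D))$ with the orthogonal automorphism group that Proposition~\ref{iso} made explicit.
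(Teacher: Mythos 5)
Your proposal is essentially correct, and it is worth noting that the paper itself offers no proof of this proposition: it records the bracket $[x,y]=\la Ax,y\ra z$ and the center $\zz(\mathcal G(\RR^{p,q}))=\RR z\oplus\ker A$ in the paragraphs preceding the statement and leaves everything else implicit. Your write-up supplies exactly the missing details, and the route (the skew form $\omega(x,y)=\la Ax,y\ra$ with radical $\ker A$, the symplectic reduction to $\RR^{p,q}/\ker A$, the orthogonal splitting of the metric for the signature, and the reduction of the last assertion to nondegeneracy of $\ker A$ plus the citation of \cite{C-P}) is the natural one and surely what the author had in mind. The stated signature ``$(p,q+1)$ or $(p,q+1)$'' in the proposition is a typo; your $(p+1,q)$ or $(p,q+1)$ is the intended reading.

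One step deserves a sharper justification. In the indecomposability argument you conclude $J_2\subseteq\ker A$ from ``$z\in J_1$ and $J_1\cap J_2=0$.'' That inference does not go through: an element $x+cz\in J_2$ with $x\in\ker A$ and $c\neq 0$ need not lie in $J_1$. What you must use is the orthogonality of the decomposition: since $z\in J_1$ and $J_2\perp J_1$, every $x+cz\in J_2$ satisfies $0=\la x+cz,z\ra=c\la z,z\ra$, and $\la z,z\ra=\pm1$ forces $c=0$. This is not a cosmetic point. If one only required a direct sum of nondegenerate ideals without orthogonality, the claim would be false: for $x_0\in\ker A$ isotropic and nonzero, the central line $\RR(x_0+z)$ is a nondegenerate ideal admitting a nondegenerate (non-orthogonal) ideal complement of the form $W\oplus\RR z$ with $W=v^{\perp}$ for a suitable non-isotropic $v$. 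So the orthogonality you built into the hypothesis is exactly where the argument must lean; with that substitution the proof is complete.
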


\begin{exa} The  isometry groups  for  the naturally reductive spaces $(H_3, \la\,,\,\ra_i)$ 
 i=0,1,2 in (\ref{exar2}) are

\vskip 3pt

$\Or(2) \oplus H_3$ for the metrics $\la\,,\,\ra_i$ i= 0,1;

\vskip 3pt

$\Or(1,1) \oplus H_3$ for the metric $\la\,,\,\ra_2$

\vskip 3pt

where $\Or(2)$ denote the orthogonal group of $\RR^2$ and $\Or(p,q)$ the
orthogonal group for $\RR^{p,q}$. On each isometry Lie algebra $\RR \oplus
\hh_3$ equipped with an $\ad$-invariant metric, every skew-symmetric
derivation is an inner derivation. 
\end{exa}

\begin{exa} \label{f3} The free 3-step nilpotent Lie algebra in two generators can be
obtained by a double extension procedure: take $\RR^{1,2}$ and the
skew-symmetric map $A$ with matrix
$$\left( \begin{matrix}
0 & 1 & 0 \\
1 & 0 & 1\\
0 & -1 & 0
\end{matrix}
\right).
$$
In this way we get the Lie algebra $\mathfrak a(1,2)$ spanned by the vectors
$e_0, e_1, e_2, e_3, e_4$ which satisfy the Lie bracket relations
$$[e_4,e_1]=e_2\qquad [e_4,e_2]=e_1-e_3 \qquad [e_4,e_3]=e_2$$
$$[e_1,e_2]=e_0 = [e_3, e_2].$$
An $\ad$-invariant metric can be defined on $\mathfrak a(1,2)$ obeying the rules
$$\la e_1, e_1\ra=-1 \qquad \la e_2,e_2\ra=\la e_3, e_3\ra=\la e_4,
e_4\ra=\la e_0, e_4\ra=1.$$
The kernel of $A$ is the vector space spanned by $e_1-e_3$ which is totally
isotropic. Thus 
$$\mathcal G(\RR^{1,2})=\RR z \oplus \hh_3$$
is   indecomposable. 

Note that the corresponding simply connected Lie group whose
Lie algebra is $\mathcal G(\RR^{1,2})$, admits two nilpotent non-isomorphic Lie groups acting transitively on it.

The resulting metric on $\mathcal G(\RR^{1,2})$ is Lorentzian if the metric on 
$z$ is
considered positive, while it is neutral if
the metric on $z$ is chosen negative.
\end{exa}

To get other kind of Lie algebras $\mathcal G(\dd)$ which are  not
2-step nilpotent, we should need nonabelian Lie algebras $\dd$. 
But moreover one also needs that the set of skew-symmetric derivations of
such a $\dd$  consists not only of inner derivations.
According to investigations in \cite{F-S}, if $A, B$ are skew-symmetric 
derivations of $\dd$, then the double extension
Lie algebras $\ggo_A$ and $\ggo_B$ getting from $A$ and $B$ respectively are
isomorphic and isometric if and only if  there exist $\lambda \in \RR-\{0\}$,
$T\in \dd$ and $\varphi\in \Aut(\dd)$ such that
$$\varphi B \varphi^{-1}=\lambda A + \ad(T).$$

The proof of the next lemma follows by computations following the
 definitions. Notice that we choose a different $\ad$-invariant metric
  from that in (\ref{f3}).

\begin{lem} \label{dero} Let $\aa(1,2)$ denote the free  3-step nilpotent Lie algebra in
two generators as in (\ref{f3}).  
Then the Lie algebra of skew-symmetric derivations is the semidirect sum
 $$\Dera(\aa(1,2))=\mathfrak{sl}(2,\RR)
\oplus \hh_3 $$ 
where the Lie algebra of inner derivations of $\aa(1,2)$ is isomorphic 
to   $\hh_3$, the Heisenberg Lie algebra of dimension three.
\end{lem}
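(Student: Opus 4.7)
The strategy is to exploit that $\aa(1,2)$ is the free three-step nilpotent Lie algebra on two generators, $e_1$ and $e_4$. Set $V:=\spa\{e_1,e_4\}$, $v:=[e_1,e_4]=-e_2$, and $W:=\spa\{e_0,e_1-e_3\}$, so that the three subspaces $V$, $\spa\{v\}$, $W$ realise the grading of $\aa(1,2)$. With respect to this grading, $\Der(\aa(1,2))=\glg(V)\oplus\mathfrak n$ as vector spaces, where $\glg(V)$ consists of degree-preserving derivations and $\mathfrak n$ of strictly degree-increasing ones. Since $\aa(1,2)$ is free and every fourfold bracket vanishes, any linear map $V\to\aa(1,2)$ extends uniquely to a derivation, so $\dim\Der(\aa(1,2))=10$, $\dim\glg(V)=4$, and $\dim\mathfrak n=6$.

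Next I would identify the inner derivations. Direct inspection gives $Z(\aa(1,2))=W$, so the ideal of inner derivations is three-dimensional with basis $\ad(e_1),\ad(e_4),\ad(v)$. Using $[\ad(e_1),\ad(e_4)]=\ad(v)$ and the fact that $\ad(v)$ commutes with $\ad(e_1)$ and $\ad(e_4)$ (because $[\aa(1,2),[\aa(1,2),\aa(1,2)]]\subseteq Z(\aa(1,2))$), this subalgebra is isomorphic to $\hh_3$ with centre $\RR\ad(v)$. Moreover every inner derivation strictly increases the degree, so this ideal is contained in $\mathfrak n$.

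I would then fix an $\ad$-invariant metric on $\aa(1,2)$ for which $V$ is totally isotropic. The $\ad$-invariance conditions force $v$ to be orthogonal to $V$ and $W$, $W$ itself to be totally isotropic, and the $V$--$W$ pairing to be of rank-two hyperbolic shape, but they leave the restriction to $V$ free; this is the freedom used to pick a metric different from that of Example \ref{f3}. Imposing $\la Dx,y\ra+\la x,Dy\ra=0$ on pairs drawn from the basis $\{e_1,e_4,v,e_0,e_1-e_3\}$ then yields the following dichotomy. For $D\in\glg(V)$, the only non-trivial constraint is the tracelessness of $D|_V$ (from the single pair $(v,v)$, since pairs inside the isotropic $V$ give no condition), so $\Dera(\aa(1,2))\cap\glg(V)=\mathfrak{sl}(V)\cong\mathfrak{sl}(2,\RR)$. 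For $D\in\mathfrak n$, the three conditions coming from pairs inside $V$ cut the six-dimensional $\mathfrak n$ down to exactly the three-dimensional subspace spanned by $\ad(e_1),\ad(e_4),\ad(v)$, i.e., the inner derivations. Combining these gives $\Dera(\aa(1,2))=\mathfrak{sl}(2,\RR)\oplus\hh_3$ as vector spaces, and the formula $[D,\ad(x)]=\ad(Dx)$ identifies the action of $\mathfrak{sl}(2,\RR)$ on $\hh_3$ as the standard two-dimensional representation on $\spa\{\ad(e_1),\ad(e_4)\}$ together with the trivial action on $\RR\ad(v)$, establishing the claimed semidirect sum.

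The main technical subtlety is the choice of metric: a positive definite or Lorentzian restriction to $V$ would cut $\Dera(\aa(1,2))\cap\glg(V)$ down to one-dimensional $\sso(2)$ or $\sso(1,1)$ respectively, and the resulting complement to $\hh_3$ would not be the three-dimensional $\mathfrak{sl}(2,\RR)$ of the statement. The totally isotropic choice---still compatible with $\ad$-invariance---is exactly what permits the required three-dimensional complement.
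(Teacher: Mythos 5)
Your argument is correct, and it takes a genuinely different route from the paper's. The paper's proof is a bare-hands matrix computation: in the ordered basis $\{e_0,e_1-e_3,e_2,e_1,e_4\}$ it writes down the general $5\times 5$ matrix of a skew-symmetric derivation (a six-parameter family) and then exhibits explicit matrices $H,E,F,X,Y,Z$ realizing $\mathfrak{sl}(2,\RR)$ and $\hh_3$ together with the commutation relations giving the semidirect sum, identifying the inner derivations by inspection at the end. You instead use the universal property and the grading of the free nilpotent algebra to get $\dim\Der(\aa(1,2))=10$ and the splitting into degree-preserving and strictly degree-increasing parts, identify the inner derivations abstractly as $\aa(1,2)/Z(\aa(1,2))\cong\hh_3$, and impose skew-symmetry degree by degree. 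What this buys is a conceptual explanation of the paper's otherwise cryptic remark that ``a different $\ad$-invariant metric from that in (\ref{f3})'' is used: you actually classify the $\ad$-invariant metrics, observe that the restriction to the generating subspace $V$ is the only free parameter, and show that taking $V$ totally isotropic is precisely what enlarges the degree-preserving skew-symmetric part from the one-dimensional $\sso(1,1)$ (which the metric of Example \ref{f3} would give) to the full $\mathfrak{sl}(2,\RR)$ --- a point the paper leaves implicit. Two small verifications should be made explicit to close the argument. First, for $D$ degree-preserving the pairs $(u,w)$ with $u\in V$, $w\in W$ also impose the condition $A^{T}P+PA=0$, where $P$ is the matrix of the $V$--$W$ pairing; since $P$ is a nonzero multiple of the standard symplectic form in dimension two, this is again equivalent to $\tr A=0$, so no new constraint arises, but this must be said. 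Second, for $D$ strictly increasing, the pairs $(u,v)$ with $u\in V$ give two further conditions beyond the three coming from $V\times V$; a short computation using $Dv=[De_4,e_1]+[e_4,De_1]$ and relation (\ref{brac})-type identities shows they are automatically satisfied on the three-dimensional subspace cut out by the $V\times V$ conditions, which is needed to conclude that the skew-symmetric part of $\nn$ is exactly the inner derivations. With those two checks added, your proof is complete and, in my view, more illuminating than the matrix verification in the paper.
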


\begin{proof}
In the ordered basis $\{e_0,e_1-e_3, e_2,e_1, e_4\}$
 any skew-symmetric derivation has a matrix of the form
$$\left(
\begin{matrix}
a & b & x & z & 0 \\
c & -a & y & 0 & z \\
0 & 0 & 0 & y & -x\\
0 & 0 & 0 & a & b \\
0 & 0 & 0 & c & -a
\end{matrix}
\right) \qquad \quad a,b,c,x,y,z\in \RR.$$
Denote by $e_{ij}$ the $5\times 5$ matrix whose entries are all 0 except
for the entry $1$ at the position $(ij)$. Consider the
matrices
$$
\begin{array}{rcl}
H & = & e_{11}-e_{22}+e_{44}-e_{55} \\
E & = & e_{12} + e_{45}\\
F & = & e_{21}+ e_{54}\\ \\
X & = & e_{13}-e_{35} \\
Y & = & e_{23}+e_{34} \\
Z & = & e_{14} + e_{25},
\end{array}
$$
then provided with the usual Lie bracket of matrices, one has 
$\spa\{H,E,F\}=\mathfrak{sl}(2,\RR)$, $\spa\{X,Y,Z\}=\hh_3$
and  $[H,X]=X$, $[H, Y]=-Y$, $[E,Y]=X$, $[F,X]= Y$. Therefore the Lie algebra of skew-symmetric derivations is 
 $$\Dera(\aa(1,2))=\mathfrak{sl}(2,\RR)
\oplus \hh_3 \qquad \mbox{ semidirect sum}.$$ 

\vskip 2pt

It is not hard to prove that the Lie algebra of inner derivations
is 

$\Deri(\aa(1,2))=\spa\{X,Y,Z\}$.
\end{proof}

Each of the Lie algebras resulting from the construction of Theorem
(\ref{t22}) by  $H$, $E$, $F$  is  4-step nilpotent. 
We shall study this in more detail.

For every Lie algebra $\ggo$, recall that the derived $\{C^i(\ggo)\}$ and 
descending central $\{D^i(\ggo)\}$ 
series are inductively defined by
$$
C^0(\ggo)  =  \ggo = D^0(\ggo)
$$
$$C^i(\ggo)=[C^{i-1}\ggo, C^{i-1}\ggo] \qquad \quad D^i(\ggo)=[\ggo, D^{i-1}\ggo]\qquad
i\geq 1.
$$
A Lie algebra is called {\em k-step} solvable if there exists $k$ such that 
$C^k(\ggo)=0$ but $C^{k-1}(\ggo) \neq 0$. 

Let $\mathcal G(\dd)$ denote the Lie algebra arising from the construction in  (\ref{t22}). From the definitions above
one has
$$C^0(\mathcal G(\dd))=\mathcal G(\dd)\qquad C^1(\mathcal G(\dd))=\{[x,y]\,:\,
x,y \in \dd\} \subseteq C^1(\dd)\oplus \hh^*,$$
and inductively one verifies  
$$C^i(\mathcal G(\dd))\subseteq C^i(\dd) \oplus \hh^* \qquad \mbox{ for } i \geq
1.$$ 
Therefore if $\dd$ is k-step solvable, $C^k(\dd)=0$ and hence
$C^{k}(\mathcal G(\dd))\subseteq \hh^*$ giving 
$$C^{k+1}(\mathcal G(\dd))=0$$
so that $\mathcal G(\dd)$ is at most $k+1$-step solvable. 
Notice that $\mathcal G(\dd)$ could be $k$-step solvable if for all $x,y\in
C^{k-1}(\dd)$ it holds $\beta(x,y)=0$, that is
\begin{equation}\label{sol}\la \pi(h) x, y \ra_{\dd}=0 \qquad \mbox{ for all } h\in \hh, \, x,y \in
C^{k-1}(\dd).
\end{equation}

A Lie algebra $\ggo$ is said to be {\em k-step} nilpotent if $D^k(\ggo)=0$ but
 $D^{k-1}(\ggo)\neq 0$. By computing one has
 $$D^0(\mathcal G(\dd))=\mathcal G(\dd)\qquad D^1(\mathcal G(\dd))=\{[x,y]\,:\,
x,y \in \mathcal G(\dd)\} \subseteq D^1(\dd)\oplus \hh^*,$$
and in general by induction one gets
$$D^i(\mathcal G(\dd))\subseteq D^i(\dd) \oplus \hh^* \qquad \mbox{ for } i \geq
1.$$
 So if $\dd$ is $k$-step nilpotent, $\mathcal G(\dd)$ is at most $k+1$-step
 nilpotent. Moreover, $\mathcal G(\dd)$ is $k$-step nilpotent if and only if
 $$\la \pi(h) x, y\ra_{\dd}=0 \qquad \mbox{ for all } h\in \hh, x\in D^{k}(\dd), 
 y\in \dd,
 $$
equivalently 
\begin{equation}\label{nil}
D^k(\dd) \subseteq  \bigcap_{h\in \hh} \ker \pi(h).
\end{equation}

\begin{prop} Let $\dd$ denote a  Lie algebra with $\ad$-invariant
metric $\la \,,\,\ra_{\dd}$ and let $\mathcal G(\dd)$ denote the Lie algebra $\mathcal
G(\dd)=\dd\oplus \hh^*$ with the Lie bracket as in (\ref{t22}). Then

\begin{itemize}

\item If $\dd$ is $k$-step solvable, $\mathcal G(\dd)$ is either $k$- or $k+1$-step
solvable. It is $k$-step solvable if and only if  (\ref{sol}) holds.

\item If $\dd$ is $k$-step nilpotent, $\mathcal G(\dd)$ is either $k$- or $k+1$-step
nilpotent. It is $k$-step nilpotent if and only if  (\ref{nil}) holds.
\end{itemize}
\end{prop}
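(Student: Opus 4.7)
The plan is to prove the two items in parallel, exploiting the fact that $\hh^*$ is central in $\mathcal G(\dd)$: by the bracket (\ref{brgd}), the $\hh^*$ summand never contributes to a bracket, so $[\mathcal G(\dd),\hh^*]=0$.

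First I would establish, by a short induction on $i\ge 1$, the containments
$$C^i(\mathcal G(\dd))\subseteq C^i(\dd)\oplus \hh^*, \qquad D^i(\mathcal G(\dd))\subseteq D^i(\dd)\oplus \hh^*,$$
which are already sketched in the paragraphs preceding the proposition and follow at once from (\ref{brgd}). If $\dd$ is $k$-step solvable (resp. nilpotent), this gives $C^k(\mathcal G(\dd))\subseteq \hh^*$ (resp. $D^k(\mathcal G(\dd))\subseteq \hh^*$); a further bracket then lands in $[\mathcal G(\dd),\hh^*]=0$, so $C^{k+1}(\mathcal G(\dd))=0$ (resp. $D^{k+1}(\mathcal G(\dd))=0$). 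This proves the upper bound of $k+1$ in both cases.

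To determine when the step is exactly $k$, I would then note that the projection onto $\dd$ actually maps $C^i(\mathcal G(\dd))$ onto $C^i(\dd)$ and $D^i(\mathcal G(\dd))$ onto $D^i(\dd)$, by an induction that uses the identity $[x,y]=[x,y]_\dd+\beta(x,y)$ to realise every $\dd$-bracket as the $\dd$-component of a bracket in $\mathcal G(\dd)$. Hence $\mathcal G(\dd)$ is $k$-step solvable (resp. nilpotent) if and only if the $\hh^*$-component of $C^k(\mathcal G(\dd))$ (resp. $D^k(\mathcal G(\dd))$) already vanishes. For the derived case this $\hh^*$-component is spanned by values $\beta(x,y)$ with $x,y\in C^{k-1}(\dd)$, and the defining formula $\beta(x,y)(h)=\la \pi(h)x,y\ra_\dd$ converts the vanishing into (\ref{sol}). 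For the descending central case the $\hh^*$-component is spanned by $\beta(y,x)$ with $y\in\dd$ and $x$ in the $\dd$-projection of $D^{k-1}(\mathcal G(\dd))$, and the same formula, combined with the skew-symmetry of $\pi(h)$ and the nondegeneracy of $\la\,,\,\ra_\dd$, rewrites the vanishing as (\ref{nil}).

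I do not anticipate a serious obstacle: the whole argument is an induction on the series combined with the bracket formula (\ref{brgd}). The one point that requires care is the surjectivity of the $\dd$-projection onto $C^i(\dd)$ and $D^i(\dd)$, because without it one would have to worry about linear combinations of brackets with cancelling $\dd$-parts but possibly nonzero $\hh^*$-parts; the surjectivity reduces the problem to pairs of generators and thereby yields the clean pointwise conditions (\ref{sol}) and (\ref{nil}).
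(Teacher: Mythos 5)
Your argument is correct and follows essentially the same route as the paper: the inductive containments $C^i(\mathcal G(\dd))\subseteq C^i(\dd)\oplus \hh^*$ and $D^i(\mathcal G(\dd))\subseteq D^i(\dd)\oplus \hh^*$ combined with the centrality of $\hh^*$ give the upper bound $k+1$, and the surjectivity of the $\dd$-projection onto $C^i(\dd)$ and $D^i(\dd)$ (which the paper leaves implicit but which you rightly single out as the point needing care) reduces the exactness question to the vanishing of $\beta$ on the relevant subspaces. One remark: in the nilpotent case your derivation actually produces the condition $D^{k-1}(\dd)\subseteq\bigcap_{h\in\hh}\ker\pi(h)$ rather than the paper's literal (\ref{nil}), which involves $D^{k}(\dd)$ and is vacuous under the hypothesis that $\dd$ is $k$-step nilpotent (since then $D^k(\dd)=0$, so (\ref{nil}) would wrongly force $\mathcal G(\dd)$ to be $k$-step nilpotent always, contradicting e.g.\ the Heisenberg example with $\dd$ abelian); the paper's index appears to be an off-by-one slip and your version is the correct statement.
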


\begin{exa} Let $\{z, e_0, e_1-e_3, e_2, e_1, e_4\}$ denote a basis of 
the central extension of $\aa(1,2)$ as in (\ref{t22}) by the cocycle induced by one of the skew-symmetric derivations $H, E, F$ in Lemma
(\ref{dero}). For each extension one has
$$[e_4, e_2]=e_2 \qquad [e_4, e_2]=e_1-e_3 \qquad [e_1, e_2]=e_0,$$
and the additional Lie brackets follow
$$\begin{array}{llc}
\mathcal G_H(\aa(1,2)) & &-[e_4, e_0]= z =[e_1, e_1-e_3]\\ \\
\mathcal G_E(\aa(1,2)) & & [e_4, e_1-e_3]=z\\ \\
\mathcal G_F(\aa(1,2)) & & [e_1, e_1-e_3]=z
\end{array}
$$
giving rise to 4-step nilpotent Lie algebras. The corresponding Lie groups
provided with  suitable pseudo Riemannian metrics as in Theorem (\ref{t22}) are naturally reductive
but nonsymmetric. 
\end{exa}

\vskip 5pt

{\bf Acknoledgements.}   The author is grateful to the referee for
suggesting modifications to the original version in order to get improved
results;    to V.
Bangert, Mathematisches Institut der Albert-Ludwigs Universit\"at Freiburg,
for his support during the stay of the author in this institution, where
the main part of this work was done. 
 
 For motivating discussions and for highlighting a deeper geometry many thanks to 
 A. Kaplan.


\begin{thebibliography}{GGGG}


\bibitem{A-S}{\sc W. Ambrose; I. M. Singer}, {\it On homogeneous Riemannian
manifolds}, Duke Math. J. {\bf 25}  (1958), 647 - 669. 

\bibitem{B-K}{\sc H. Baum; I. Kath}, {\it Doubly extended Lie
groups---curvature, holonomy and parallel spinors}.  Differential Geom. 
Appl.  {\bf 19}  (2003),  no. 3, 253 - 280.  

\bibitem{BGGNV} {\sc M. Brozos V\'azquez, E. Garc\'\i a R\'\i o, P. 
Gilkey, S. Nikeevi\'c, R. V\'azques Lorenzo}, {\it The Geometry of Walker 
Manifolds} (Synthesis Lectures on Mathematics and Statistics), Morgan \& 
Claypool Pub. (2009).

 \bibitem{C-P} {\sc L. A. Cordero, P. E. Parker}, {\it Isometry groups of
  pseudoriemannian 2-step nilpotent Lie groups}, Houston J. Math. 
  {\bf 35} no. 1 (2009), 49 - 72.

\bibitem{Co} {V. Cortes}, {\it Handbook of Pseudo-Riemannian Geometry and 
Supersymmetry}, IRMA Lectures in Mathematics and Theoretical Physics {\bf
16 } (2010).

\bibitem{DA-Z} {\sc J. E. D'Atri; W. Ziller}, {\it Naturally reductive
metrics and Einstein metrics on compact Lie groups},  Mem. Amer. Math. Soc. 
{\bf 18} no. 215 (1979).

\bibitem{dW} {\sc B. de Wit}, {\it  Supergravity},  
Unity from duality: gravity, gauge theory and strings (Les Houches, 2001),
  1--135, NATO Adv. Study Inst., EDP Sci., Les Ulis, (2003).
  
\bibitem{Du1} {\sc Z. Du\v{s}ek}, {\it Survey on homogeneous 
geodesics},  Note Mat. {\bf 1} (2008), suppl. no. 1, 147 - 168.

\bibitem{Du2} {\sc Z. Du\v{s}ek}, {\it Almost g.o. spaces in
dimensions 6 and 7},  Adv. Geom.  {\bf 9} (2009),  99 - 110.

\bibitem{Du3} {\sc Z. Du\v{s}ek}, {\it  The existence of homogeneous 
geodesics in homogeneous pseudo-Riemannian and affine manifolds}, 
J. Geom. Phys. {\bf 6} no. 5 (2010), 687 - 689.


\bibitem{D-K} {\sc Z. Du\v{s}ek, O. Kowalski}, {\it On six-dimensional 
pseudo-Riemannian almost g.o. spaces},  J. Geom. Phys.  {\bf 57} no. 10 
(2007),  2014 - 2023.
  
\bibitem{Eb}{\sc  P. Eberlein}, {\it Geometry of 
  2-step nilpotent Lie groups with a left invariant metric}, Ann. sci. \'Ec.
  Norm. Sup. {\bf 27}  (1994), 611 - 660. 

 \bibitem{Ff}{\sc J.Figueroa O'Farrill}, {\it Lorentzian symmetric
 spaces in supergravity},  Recent developments in pseudo-Riemannian geometry,  
 419 - 454, ESI Lect. Math. Phys., Eur. Math. Soc., Z\"urich (2008). 
 
 \bibitem{F-M-P1}{\sc J.Figueroa O'Farrill, P. Meessen, S.Philip}, 
   {\it  Supersymmetry and homogeneity of M-theory backgrounds}, Class.
    Quant. Grav. {\bf 22} no. 1 (2005), 207 - 226.
   
% \bibitem{F-M-P2}{\sc J.Figueroa O'Farrill, P. Meessen, S.Philip},
%  {\it  Homogeneity and plane wave limits},   J. High Energy Phys. 
%  (2005),  no. 5, 050, 42 pp.
  
\bibitem{F-S}{\sc G.  Favre, L. Santharoubane}, {\it  Symmetric, 
invariant, nondegenerate bilinear form on a Lie algebra},   J. Algebra 
{\bf 105} no. 2 (1987),  451 - 464. 

\bibitem{G-O1}{\sc P. M. Gadea, J. A. Oubi\~na}, {\it  Homogeneous
pseudo-Riemannian structures and homogeneous almost para-Hermitian
structures},  Houston J. Math.  {\bf 18} no. 3  (1992),  449 - 465.

\bibitem{G-O2}{\sc P. M. Gadea, J. A. Oubi\~na}, {\it  Reductive
homogeneous pseudo-Riemannian manifolds},  Monatsh. Math.  {\bf 124} no. 1 
(1997),  17 - 34. 


\bibitem{Go} {\sc C. Gordon}, {\it Naturally reductive homogeneous Riemannian 
manifolds}, Canad. J. Math. {\bf 37} no. 3  (1985), 467 - 487. 

\bibitem{Ka} {\sc A. Kaplan}, {\it On the geometry of groups of 
Heisenberg type}, Bull. London Math. Soc. {\bf  15} (1983), 35 - 42.

\bibitem{K-N} {\sc S. Kobayashi, K. Nomizu}, {\it Foundations of
Diff. Geometry} vol. II, Interscience Publ. 1969.

 \bibitem{Ko1} {\sc B. Kostant}, {\it On differential geometry and
 homogeneous spaces II}, Proc. N.A.S. {\bf 42}   (1956), 354 - 357. 

 \bibitem{Ko2} {\sc B. Kostant}, {\it Holonomy and the Lie algebra of 
 infinitesimal motions of a Riemannian manifold}, Trans. Amer. Math. Soc. {\bf 80} 
 (1955), 528 - 542. 

\bibitem{K-V} {\sc O. Kowalski, L. Vanhecke},
 {\it  Riemannian manifolds with homogeneous geodesics}, Boll. Un. Math.
  Ital. B (7) {\bf 5} (1991), 189 - 246.

\bibitem{Kw} {\sc O. Kowalski}, {\it Generalized Symmetric Spaces}, 
Lecture Notes in Math., vol. {\bf  805}, Springer- Verlag, Berlin - 
Heidelberg - New York (1980).

\bibitem{La1} {\sc J. Lauret}, {\it Homogeneous nilmanifolds attached to
representations of compact Lie groups}, Manuscripta math. {\bf 99} (1999),
287 - 309.

\bibitem{La2} {\sc J. Lauret}, {\it Naturally reductive homogeneous
structures on $2$-step nilpotent Lie groups},  Rev. Un. Mat. Argentina  
{\bf 41} no. 2  (1998),   15 - 23.

\bibitem{Lt1} {\sc D. Latifi}, {\it Homogeneous geodesics in homogeneous
Finsler spaces}.  J. Geom. Phys.  {\bf 57} no. 5  (2007),  1421 - 1433.

\bibitem{Lt2} {\sc D. Latifi}, {\it Naturally reductive homogeneous Randers
spaces}, J. Geom. Phys., to appear (2010).

\bibitem{L-M-P-V} {\sc M. A. Lledo, O. Macia, A. Van Proeyen, V.
S. Varadarajan}, {\it Special geometry for arbitrary signatures}, Handbook
 of pseudo-Riemannian Geometry and Supersymmetry, IRMA Lectures in 
 Math. and Theoretical Phys. {\bf 16} (2010), 85 - 147.
  
\bibitem{Me1} {\sc P. Meessen}, 
{\it Homogeneous Lorentzian Spaces Whose
Null-geodesics are Canonically Homogeneous}, Lett. Math. Phys. {\bf 75} 
(2006), 209 - 212.

\bibitem{Me2} {\sc P. Meessen}, {\it Homogeneous Lorentzian spaces 
admitting a homogeneous structure of type 
$\mathcal T_1 \oplus \mathcal T_3$}, J. Geom. Phys. {\bf 56}
 (2006), 754 - 761.

\bibitem{M-R} {\sc A. Medina, P. Revoy}, {\it Alg\`ebres de Lie  et
produit scalaire invariant} (French) [Lie algebras and invariant scalar 
products],   
Ann. Sci. \'Ecole Norm. Sup. (4) {\bf  18} no. 3 (1985),  553 - 561.

\bibitem{Mu} {\sc D. M\"uller}, {\it Isometries of bi-invariant pseudo-Riemannian metrics 
on Lie groups},  Geom. Dedicata  {\bf 29} no. 1  (1989), 65 - 96.

\bibitem{ON} {\sc B. O'Neill}, {\it Semi-Riemannian geometry with
applications to relativity}, Academic Press (1983).

\bibitem{O-V} {\sc H. Ooguri,  C. Vafa}, {\it Geometry of 
N = 2 strings}, Nuclear Phys. B {\bf 361} (1991), 469 - 518.

\bibitem{Ov} {\sc G. Ovando}, {\it Naturally reductive pseudo-Riemannian 
2-step nilpotent Lie groups}  (2009), arXiv:0911.4067v2.

\bibitem{S-S} {\sc  V. Sahni, Y. Shtanov}, {\it New vistas in 
braneworld cosmology}, Internat. J. Modern Phys. D  {\bf 11} (2002), 
1515 - 1521.

\bibitem{T-V1} {\sc  F. Tricerri, L. Vanhecke}, {\it Homogeneous 
structures on Riemannian manifolds}, London  Math. Soc. Lecture Note 
Ser. {\bf 83}, Cambridge University Press, Cambridge (1983).

\bibitem{T-V2} {\sc F. Tricerri, L. Vanhecke}, {\it Naturally 
reductive homogeneous spaces and generalized
      Heisenberg groups}, Compositio Math. {\bf 52} (1984), 389 - 408.

\bibitem{vP} {\sc A. Van Proeyen}, {\it Structure of supergravity 
theories},  Proceedings of the XI Fall Workshop on Geometry and Physics,
  3 - 32, Publ. R. Soc. Mat. Esp., {\bf 6}, R. Soc. Mat. Esp., Madrid, 
  2004.
   
\bibitem{Vi} {\sc E. B. Vinberg}, {\it Invariant linear connections in a 
homogeneous manifold}, Trudy MMO {\bf 9} (1960), 191 - 210.

\bibitem{Wa} {\sc N. Wallach}, {\it Real reductive groups I}, Boston:
Academic Press  (1988).

\bibitem{Wi} {\sc E. Wilson}, {\it Isometry groups on homogeneous
nilmanifolds}, Geom. Dedicata {\bf 12}  (1982), 337 - 346.

\end{thebibliography}
\end{document}